\newcommand{\eq}{:=}
\newcommand{\supp}{\operatorname{supp}}
\newcommand{\R}{\mathbb R}
\newcommand{\Z}{\mathbb Z}
\newcommand{\N}{\mathbb N}
\newcommand{\grad}{\boldsymbol \nabla}
\renewcommand{\div}{\grad \cdot}
\newcommand{\ba}{\boldsymbol a}
\newcommand{\bb}{\boldsymbol b}
\newcommand{\bo}{\boldsymbol o}
\newcommand{\bx}{\boldsymbol x}
\newcommand{\by}{\boldsymbol y}
\newcommand{\CB}{\mathcal B}
\newcommand{\CH}{\mathcal H}
\newcommand{\CL}{\mathcal L}
\newcommand{\CM}{\mathcal M}
\newcommand{\CN}{\mathcal N}
\newcommand{\CR}{\mathcal R}
\newcommand{\CS}{\mathcal S}
\newcommand{\CW}{\mathcal W}
\newcommand{\LL}{\mathscr L}
\newcommand{\ck}{\mathcal k}
\newcommand{\cl}{\mathcal l}
\newcommand{\cm}{\mathcal m}
\newcommand{\cn}{\mathcal n}
\newcommand{\cx}{\mathcal x}
\newcommand{\cy}{\mathcal y}
\newcommand{\bzero}{\bo}
\newcommand{\tens}[1]{\underline{\boldsymbol #1}}
\newcommand{\AAA}{\tens{A}}
\newcommand{\LLL}{\tens{L}}
\newcommand{\III}{\tens{I}}
\newcommand{\JJJ}{\tens{J}}
\newcommand{\CCC}{\tens{C}}
\newcommand{\WWW}{\tens{W}}
\newcommand{\hAAA}{\widehat{\AAA}}
\newcommand{\diam}{R}
\newcommand{\hbx}{\widehat{\bx}}
\newcommand{\hby}{\widehat{\by}}
\newcommand{\hu}{\widehat u}
\newcommand{\hlambda}{\widehat \lambda}
\newcommand{\hQ}{\widehat{Q}}
\newcommand{\hmu}{\widehat{\mu}}
\newcommand{\tu}{\widetilde u}
\newcommand{\tlambda}{\widetilde \lambda}
\newcommand{\tAAA}{\widetilde{\AAA}}
\newtheorem{theorem}{Theorem}
\newtheorem{lemma}[theorem]{Lemma}
\newtheorem{proposition}[theorem]{Proposition}
\newtheorem{corollary}[theorem]{Corollary}
\newtheorem{remark}[theorem]{Remark}
\numberwithin{equation}{section}
\numberwithin{theorem}{section}
\title{A ill-posed scattering problem saturating Weyl's law}
\author{T. Chaumont-Frelet$^\dagger$}
\address{\vspace{-.5cm}}
\address{\noindent \tiny \textup{$^\dagger$Inria, Univ. Lille, CNRS, UMR 8524 -- Laboratoire Paul Painlev\'e}}
\begin{document}

\maketitle

\begin{abstract}
This paper focuses on the well-posedness (or lack thereof) of three-dimensional time-harmonic
wave propagation problems modeled by the Helmholtz equation. It is well-known that if the problem
is set in bounded domain with Dirichlet boundary conditions, then the Helmholtz problem is
well-posed for all (real-valued) frequencies except for a sequence of countably many resonant
frequencies that accumulate at infinity. In fact, if the domain is sufficiently smooth, this
can be quantified further and Weyl's law states that the number of resonant frequencies less than
a given $\omega > 0$ scales as $\omega^3$. On the other hand, scattering problems set in $\R^3$
with a radiation condition at infinity and a bounded obstacle modeled by variations
in the PDE coefficients are well-posed for all frequencies under mild regularity
assumption on such coefficients. In 2001, Filinov provided a counter example of
a rough coefficient such that the scattering problem is not well-posed for (at least)
a single frequency $\omega$. In this contribution, we use this result to show that
for all $\varepsilon > 0$ we can design a rough coefficient corresponding to a compactly
supported obstacle such that the scattering problem is ill-posed for a countable sequence of
frequencies accumulating at infinity, and such that the number of such frequencies less
than any given $\omega > 0$ scales as $\omega^{3-\varepsilon}$.
\end{abstract}

\section{Introduction}

Time-harmonic wave propagation problems play a crucial role in many areas of
physics and engineering. At the mathematical level, the simplest corresponding
PDE model is the Helmholtz equation. There, the unknown $u$ is a complex-amplitude
satisfying
\begin{equation}
\label{eq_helmholtz_intro}
-\omega^2 \mu u - \div(\AAA\grad u) = f
\end{equation}
where $\mu$ and $\AAA$ are known coefficients encoding the properties of the
propagation medium and $f$ is a given right-hand side that represents the
source of excitation that generates the wave. $\mu$ is a scalar valued functions
that is uniformly bounded away from $0$ and $+\infty$ and $\AAA$ is a symmetric
tensor-valued function that is uniformly elliptic and bounded. It is then of
central interest to understand whether this modeling leads to a well-posed
problem when the PDE in~\eqref{eq_helmholtz_intro} is supplemented with boundary conditions.

For interior problems, the PDE in~\eqref{eq_helmholtz_intro} needs to be
satisfied weakly in a bounded domain $\Omega \subset \R^3$, and we additionally
impose the Dirichlet condition that $u = 0$ on the boundary $\partial \Omega$
(Neumann or mixed boundary conditions could also be considered without significant
complications). In this case, it is fairly clear that the Helmholtz problem
if and only if $\omega^2$ does not correspond to an eigenvalue of Dirichlet
Laplacian in $\Omega$, i.e., a real number $\lambda$ such that
\begin{equation}
\label{eq_eigen_intro}
-\div(\AAA\grad \psi) = \lambda \mu \psi
\end{equation}
for some function $\psi \neq 0$. In most applications, $\mu$ and $\AAA$
are measurable, and uniformly bounded and elliptic. Then, standard spectral
theory gives a fully satisfactory framework to analyse such eigenvalues
(see e.g. \cite[Chapters 6 and 9.8]{brezis_2010a}).
In addition, if $\mu$, $\AAA$, and the boundary $\partial \Omega$ are
sufficiently smooth, then Weyl's law gives a precise characterization
of the distribution of eigenvalues~\cite{agmon_1968a,weyl_1911a}. Specifically, this law states
that the number of eigenvalues less than any given $\lambda \geq 0$
scales as $\lambda^{3/2}$ as $\lambda \to +\infty$. Correspondingly,
the number of frequencies less than $\omega$ for which the Helmholtz
problem is ill-posed scales as $\omega^3$.

On the other, in scattering problems, we assume that $\mu = 1$
and $\AAA = \III$ outside some compact set $D \subset \R^3$,
and we demand that the PDE in~\eqref{eq_helmholtz_intro} is
satisfies a.e. in $\R^3$. In this case, we close the problem
with the Sommerfeld radiation condition
\begin{equation}
\label{eq_sommerfeld_intro}
\partial_r u - i\omega u = o(r^{-1}) \text{ as } r \to +\infty
\end{equation}
at infinity, where $r$ is the radial variable. In contrast with
interior problems for which resonant frequencies are physically
relevant, one naturally expect that the scattering
problem~\eqref{eq_helmholtz_intro}-\eqref{eq_sommerfeld_intro} is well-posed
for all (real) frequencies $\omega$. In fact, under fairly mild
assumptions on the coefficients $\mu$ and $\AAA$, combining Rellich's
theorem (see, e.g. \cite[Lemmas 9.8 and 9.9]{mclean_2000a}) and the unique
continuation principle (UCP, see \cite{burq_1998a,carleman_1933a}),
we can show that this is indeed the case. For instance, if $\mu$ and $\AAA$ are piecewise
Lipschitz onto a finite partition~\cite{ball_capdebosq_tseringxiao_2012a}, then it is true that
for all $\omega \geq 0$, there exists a unique $u$ satisfying~\eqref{eq_helmholtz_intro}
and~\eqref{eq_sommerfeld_intro}.

A natural question is then whether the aforementioned smoothness assumptions are necessary
to ensure the well-posedness of~\eqref{eq_helmholtz_intro}-\eqref{eq_sommerfeld_intro}.
More specifically, the question lies in whether the UCP holds true even for rough coefficients.
The answer is shown to be no. Specifically, the author of~\cite{filonov_2001a} constructs
coefficients $\hmu$ and $\hAAA$ for which there exits an eigenpair $(\hlambda,\widehat{\psi})$
solving~\eqref{eq_eigen_intro} with a compactly supported eigenfunction $\widehat{\psi}$.
A direct consequence is that the corresponding scattering problem cannot be well-posed
for the frequency $\widehat{\omega} = \hlambda^{1/2}$. All in all~\cite{filonov_2001a}
provides an example of scattering problem that fails to be well-posed for (at least)
one frequency.

In this work, we build on top of the result in~\cite{filonov_2001a}, and construct
coefficients $\mu$ and $\AAA$ for which the scattering problem fails to be well-posed
for countably many frequencies. In fact, for any $\varepsilon > 0$, we can construct
coefficients $\mu_\varepsilon$ and $\AAA_\varepsilon$ such that the number of frequencies
less than $\omega$ for which the problem is not well-posed scales as $\omega^{3-\varepsilon}$.
In other word, the number of (real) resonant frequencies for this scattering problem
essentially scales as the one of an interior problem. Naturally, these coefficients are
uniformly elliptic and bounded, and coincide with $1$ and $\III$ outside a compact set.

Our construction simply relies on scaling and translation, and can be easily described
as follows. Imagine rescaling the coefficients $\hmu$ and $\hAAA$ by a factor two %% TODO?
and naming the result $\hAAA_2$ and $\hmu_2$. Then, then the accordingly rescaled eigenfunction
$\widehat{\psi}_2$ is an eigenfunction for the coefficients $\hmu_2$ and $\hAAA_2$
with eigenvalue $\hlambda_2 = 2\hlambda$. Because $\widehat{\psi}$ and $\widehat{\psi}_2$
both have compact support, we can now translate the coefficients $\hmu_2$ and $\hAAA_2$
and the eigenfunction $\widehat{\psi}_2$ in such a way that its support does not overlap
with the one of $\widehat{\psi}$. Defining now coefficients $\AAA$ and $\mu$ that
are equal to $\hmu$ and $\hAAA$ on the support of $\widehat{\psi}$ and to (the translated
version of) $\hmu_2$ and $\hAAA_2$ on the (translated) support of $\widehat{\psi}_2$,
we obtain coefficients for which both $\hlambda$ and $4\lambda$ are eigenvalues with
compactly supported eigenfunctions. The crux of our analysis then consists in packing
as many cubes as possible into a bounded domain, and to translate the coefficients
as above. To be more precise, we show that we can arrange a countable sequence of
cubes $\{Q_{\ck}\}_{\ck \geq 1}$ with sides $\ell_\ck = \ck^{-(1+\varepsilon)/3}$
into a compact set of $\R^3$, which provides the desired result. This is intuitive
since the total volume of such cubes is finite because the series $\ck^{-1-\varepsilon}$
converges. Yet, precisely arranging the cubes and establishing the result is technical.

The remainder of this work is organized as follows. Section~\ref{section_notation}
collects notation and preliminary results. In Section~\ref{section_result} we establish
our main results under the assumption that suitable packing of cube is provided. Finally,
we establish the required cube packing result in Section~\ref{section_cube}.

\section{Notation and preliminary results}
\label{section_notation}

\subsection{Scalar, vector and tensors}

We employ the standard italic font ($a,b,\dots$) to denote scalar quantities.
Vectors are denoted with a bold font ($\ba,\bb,\dots$) and tensors with an
underline ($\AAA,\tens{B},\dots$). The symbol $|\cdot|$ may be used for scalars,
vectors and tensors. For scalars, it is the modulus, for vectors the euclidean
norm, and for tensors, the matrix norm induced by the euclidean norm.

\subsection{Function spaces}

Hereafter, for an open set $D \subset \R^3$, $L^2(D)$
and $H^1(\Omega)$ are the usual (complex-valued) Lebesgue
and Sobolev spaces, see~\cite{brezis_2010a}.
We denote by $L^2_{\rm c}(D)$ the subset of $L^2(D)$
collecting functions with compact support. We will also employ
the standard $L^\infty(D)$ for the (real-valued) Lebesgue spaces
of essentially bounded functions, and denote by $\LLL^\infty(D)$
the set of tensor-valued functions (i.e. mapping $D$ to $\R^{3 \times 3}$
that belong element wise to $L^\infty(D)$. We finally employ the
(real-valued) fractional Sobolev spaces $W^{s,p}(D)$ for $0 < s < 1$
and $p \geq 1$, and $\WWW^{s,p}(D)$ contains tensor-valued functions
whose entries belong to $\WWW^{s,p}(D)$. These fractional Sobolev
spaces are analyzed, e.g., in~\cite[Chapter 3]{mclean_2000a}.

We classically denote by $C^0(\R^3)$ the space of real-valued continuous function
and, for $0 < \alpha \leq 1$, $C^{0,\alpha}(\R^3)$ is the space of H\"older continuous
functions. We refer the reader to~\cite[Section 5.1]{evans_1998a} for a description of these spaces.
We denote by $\CCC^0(\R^3)$ and $\CCC^{0,\alpha}(\R^3)$ the set of tensor-valued functions
which are element-wise $C^0(\R^3)$ and $C^{0,\alpha}(\R^3)$.

\subsection{Cubes and affine maps}

For $\bx \in \R^3$ and $\ell > 0$, the notation
\begin{equation*}
Q(\bx,\ell) \eq (\bx_1,\bx_1+\ell) \times (\bx_2,\bx_2+\ell) \times (\bx_3,\bx_3+\ell)
\end{equation*}
denotes the (open) cube of side $\ell$ with corner $\bx$. We also
use the short-hand notation $\hQ \eq Q(\bzero,1)$ for the reference cube.
The affine map $F: \R^3 \to \R^3$ defined by
\begin{equation}
\label{eq_affine_map}
F(\hbx) \eq \bx+\frac{1}{\ell}\hbx \qquad \forall \hbx \in \R^3
\end{equation}
is invertible. It is bijective between $\hQ$ and $Q(\bx,\ell)$,
and its Jacobian is given by
\begin{equation}
\label{eq_jacobian}
(\JJJ F)(\hbx) = \frac{1}{\ell} \III.
\end{equation}

\subsection{Reference coefficient}

As alluded to earlier, our work rely on the following
key result is established in~\cite{filonov_2001a}.

\begin{proposition}[Reference coefficient]
\label{proposition_reference_coefficient}
There exists a uniformly bounded and elliptic, continuous symmetric tensor-valued coefficient
$\hAAA \in \LLL^\infty(\R^3)$ with $\supp(\hAAA-\III) \subset \hQ$ and a function $u \in H^1(\R^3)$
with $\supp \hu \subset \hQ$ such that
\begin{equation}
\label{eq_reference_coefficient}
-\div(\hAAA \grad \hu) = \hlambda \hu \text{ in } \R^3
\end{equation}
for some $\hlambda > 0$. In addition, we have $\hAAA \in \CCC^{0,\alpha}(\R^3)$
for all $0 < \alpha < 1$.
\end{proposition}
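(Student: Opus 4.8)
The plan is to \emph{reverse-engineer} the coefficient from a prescribed, compactly supported profile, which is the mechanism underlying~\cite{filonov_2001a}. Writing $\bv\eq\hAAA\grad\hu$ for the flux, equation~\eqref{eq_reference_coefficient} is equivalent to the single scalar identity $\div\bv=-\hlambda\hu$, which pins down $\bv$ only modulo an arbitrary divergence-free field. At the algebraic level, for $\bp,\bq\in\R^3$ with $\bp\neq\bzero$ there exists a symmetric positive-definite matrix $\BM$ with $\BM\bp=\bq$ if and only if $\bp\cdot\bq>0$; one may take $\BM$ to be an explicit symmetric positive-definite endomorphism of $\mathrm{span}(\bp,\bq)$ carrying $\bp$ to $\bq$ and a fixed positive multiple of $\III$ on the orthogonal complement, and then the ellipticity and boundedness constants of $\BM$ are controlled by $|\bq|/|\bp|$ and by the angle between $\bp$ and $\bq$ staying bounded away from $\pi/2$. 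Thus the problem reduces to producing $\hlambda>0$, a compactly supported $\hu\neq0$, and a compactly supported $\bv$ with $\div\bv=-\hlambda\hu$ such that $\bv\cdot\grad\hu\geq c\,|\grad\hu|^2$ and $|\bv|\leq C\,|\grad\hu|$ pointwise; one then defines $\hAAA(\hbx)$ by the above pointwise recipe where $\grad\hu(\hbx)\neq\bzero$ and $\hAAA(\hbx)\eq\III$ on the critical set $Z\eq\{\grad\hu=\bzero\}$ (on which~\eqref{eq_reference_coefficient} holds trivially), so that automatically $\hAAA=\III$ outside $\supp\hu$.

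For the construction itself I would fix $\hlambda>0$ and a ball $B$, take $\hu\in C^\infty_{\mathrm c}(B)$ obtained by joining a regular radial solution of $-\Delta u_0=\hlambda u_0$ near the centre of $B$ smoothly to $0$ across a surrounding annulus, and — by translating and rescaling at the end, which merely rescales $\hlambda$ — arrange $\overline B\subset\hQ$, so that ultimately $\supp(\hAAA-\III)\subset B\subset\hQ$. Equivalently one picks a radial profile $g$ supported in $B$ subject to the vanishing weighted moment $\int_0^\infty s^2g(s)\,ds=0$, which makes the companion radial flux $r\mapsto -\hlambda r^{-2}\int_0^r s^2g(s)\,ds$ compactly supported and already solve the divergence equation. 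This purely radial flux does not by itself satisfy the sign and size inequalities — its zero set differs from that of $g'$ — so the remaining freedom, namely adding divergence-free corrections to $\bv$ and/or allowing a mild anisotropy in $\hAAA$, must be spent precisely to enforce $\bv\cdot\grad\hu\gtrsim|\grad\hu|^2$ and $|\bv|\lesssim|\grad\hu|$ everywhere on $B$. Granting this, uniform ellipticity, boundedness, symmetry, compact support of $\hAAA-\III$, membership $\hu\in H^1(\R^3)$, and~\eqref{eq_reference_coefficient} are all immediate, and away from $Z$ the pointwise formula makes $\hAAA$ as smooth as $\hu$ and $\bv$.

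The main obstacle is the behaviour of $\hAAA$ across $Z$, where it is a $0/0$ expression, and in particular the assertion that $\hAAA\in\CCC^{0,\alpha}(\R^3)$ for every $0<\alpha<1$. One cannot hope for more: a uniformly elliptic \emph{Lipschitz} coefficient enjoys the unique continuation principle (cf.~\cite{burq_1998a,carleman_1933a}) and hence admits no nontrivial compactly supported solution of~\eqref{eq_reference_coefficient}, so any successful construction must sit exactly at the Lipschitz threshold. Concretely, $\hu$ and $\bv$ must be tuned so that $\bv$ agrees with $\grad\hu$ to high enough order near $Z$ — so that $\hAAA$ extends continuously across $Z$ with value $\III$ and with a $t^\alpha$-type modulus of continuity for each $\alpha<1$ — while simultaneously keeping $\bv\cdot\grad\hu>0$ with $\bv\cdot\grad\hu\gtrsim|\grad\hu|^2$, $|\bv|\lesssim|\grad\hu|$, $\supp\bv$ compact, and $\div\bv=-\hlambda\hu$. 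Achieving this balance forces an essentially explicit design of the profile near its critical set and is, I expect, the technical heart of the argument; once it is in place, everything else — ellipticity, symmetry, compact support, and~\eqref{eq_reference_coefficient} itself — is routine verification.
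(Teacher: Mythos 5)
Your framework is indeed the mechanism underlying Filonov's construction (set $\bv=\hAAA\grad\hu$, reduce the PDE to the scalar constraint $\div\bv=-\hlambda\hu$, and recover a symmetric positive-definite $\hAAA$ pointwise from the pair $(\grad\hu,\bv)$ under the compatibility conditions $\bv\cdot\grad\hu\gtrsim|\grad\hu|^2$ and $|\bv|\lesssim|\grad\hu|$), and your observation that the unique continuation principle forbids a Lipschitz coefficient correctly locates the regularity threshold. But note that the paper does not reprove this: its proof of Proposition~\ref{proposition_reference_coefficient} consists of citing \cite[Theorem~1]{filonov_2001a} for the existence of $(\tAAA,\tu,\tlambda)$, and then supplying only the two elementary post-processing steps — redefining $\tAAA$ to equal $\III$ outside the support of $\tu$ (harmless, since the equation only constrains $\tAAA$ where $\grad\tu\neq\bzero$) and rescaling/translating so that everything sits inside $\hQ$, which merely rescales $\tlambda$. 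Your remarks about normalizing to $\III$ off $\supp\hu$ and rescaling into $\hQ$ reproduce exactly that part of the paper's argument.

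Where your proposal is read as a self-contained proof, however, there is a genuine gap, and you have named it yourself: the simultaneous construction of a compactly supported $\hu$ and a compactly supported $\bv$ with $\div\bv=-\hlambda\hu$, the two-sided pointwise comparison between $\bv$ and $\grad\hu$, and Hölder matching of the induced $\hAAA$ across the critical set $Z=\{\grad\hu=\bzero\}$ is left as an ``expectation.'' This is not a routine verification deferred to the end — it is the entire nontrivial content of the proposition. In the purely radial ansatz you sketch, the conflict is unavoidable: since $g$ is compactly supported and nonzero, $g'$ must vanish at interior radii, while the flux forced by the divergence equation, $v(r)=-\hlambda r^{-2}\int_0^r s^2g(s)\,ds$, has no reason to vanish there, so $|\bv|\lesssim|\grad\hu|$ fails and the pointwise matrix blows up. Repairing this by divergence-free corrections or anisotropy while preserving compact support, positivity of $\bv\cdot\grad\hu$, and the $C^{0,\alpha}$ extension across $Z$ is precisely the delicate part of Filonov's argument, and your proposal does not carry it out. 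Either complete that construction explicitly or, as the paper does, invoke \cite[Theorem~1]{filonov_2001a} and prove only the modification-and-rescaling step.
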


In fact,~\cite[Theorem 1]{filonov_2001a} gives the existence of
some $\tu \in H^1(\R^3)$ with compact support $S$ and some $\tAAA$ that does
not necessarily correspond to $\III$ outside a compact set so that~\eqref{eq_reference_coefficient}
holds true for some $\tlambda > 0$. However, we can always modify $\tAAA$ outside the support $S$
of $\tu$ so that it coincides with $\III$ outside a compact set $\widetilde{S} \subset S$,
and rescale to obtain the desired result in $\hQ$, i.e., in such a way that $\widetilde{S}$
is rescaled inside $\hQ$.

\subsection{Cube packing}

Our main results are obtained by combining a technical cube packing result
with Proposition~\ref{proposition_reference_coefficient}. We state this result
here and provide its proof in Section~\ref{section_cube} below.

\begin{lemma}[Cube packing]
\label{lemma_cube_packing}
For all $\varepsilon > 0$, there exists $a^\varepsilon \in \R_+$
and a sequence of corners $\{\bx_{\ck}^\varepsilon\}_{\ck \geq 1} \subset \R^3$
such that, letting $Q_{\ck}^\varepsilon \eq Q(\bx_{\ck}^\varepsilon,\ck^{-(1+\varepsilon)/3})$
for $\ck \geq 1$, we have
\begin{equation*}
Q_{\cn}^\varepsilon \cap Q_{\cm}^\varepsilon = \emptyset 
\end{equation*}
for all $\cn,\cm \in \N$ with $\cn \neq \cm$, and
\begin{equation*}
Q_{\ck}^\varepsilon \subset Q^\varepsilon \eq (0,1) \times (0,1) \times (0,a^\varepsilon)
\end{equation*}
for all $\ck \geq 1$.
\end{lemma}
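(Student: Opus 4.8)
The plan is to sort the cubes by size into dyadic \emph{generations}, pack each generation into a thin horizontal slab of unit-square cross-section by means of a regular grid, and then stack these slabs on top of each other; the resulting box will have finite height precisely because $\varepsilon>0$.

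For $j\ge 0$, let generation $j$ collect the indices $\ck\in\{2^j,\dots,2^{j+1}-1\}$, so it contains exactly $2^j$ cubes, each of side $\ell_\ck=\ck^{-(1+\varepsilon)/3}\le L_j\eq 2^{-j(1+\varepsilon)/3}$. Set $n_j\eq\lfloor L_j^{-1}\rfloor\ge 1$ (so that $n_j$ cubes of side $L_j$ fit, edge to edge, inside $(0,1)$), and $N_j\eq\lceil 2^j/n_j^2\rceil$ (the number of $n_j\times n_j$ layers of such cubes needed to host all $2^j$ cubes of the generation). Put $H_0\eq 0$, $H_{j+1}\eq H_j+s_j$ with $s_j\eq N_j L_j$, and $a^\varepsilon\eq\sum_{j\ge 0}s_j$. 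For $\ck=2^j+r$ with $0\le r<2^j$, write $r=q\,n_j^2+p_1\,n_j+p_2$ with $0\le q<N_j$ and $0\le p_1,p_2<n_j$, and define the corner $\bx_{\ck}^\varepsilon\eq(p_1 L_j,\,p_2 L_j,\,H_j+qL_j)$. Thus generation $j$ is laid out as a grid inside the slab $(0,1)\times(0,1)\times(H_j,H_{j+1})$.

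Two of the three required properties are then immediate. Containment $Q_{\ck}^\varepsilon\subset Q^\varepsilon$ holds because $p_i L_j+\ell_\ck\le n_j L_j\le 1$ for $i=1,2$ and $H_j+qL_j+\ell_\ck\le H_j+N_jL_j=H_{j+1}\le a^\varepsilon$. Disjointness of two cubes in the same generation follows since their triples $(q,p_1,p_2)$ differ, hence one corner coordinate differs by at least $L_j\ge\ell_\ck$, which already separates the cubes in that coordinate; disjointness across generations follows since the open slabs $(H_j,H_{j+1})$ are pairwise disjoint in the $z$-variable and every generation-$j$ cube lies in the $j$-th slab. The degenerate generation $j=0$ is the single cube $Q_1^\varepsilon=\hQ$, which forces $a^\varepsilon\ge 1$, consistent with the construction.

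The one substantive point --- where the hypothesis $\varepsilon>0$ enters --- is that $a^\varepsilon<\infty$. From $\lfloor x\rfloor> x/2$ for $x\ge 1$ we get $n_j^2> L_j^{-2}/4$, hence $N_j\le 2^j/n_j^2+1<4\cdot 2^jL_j^2+1$, and therefore, using $2^jL_j^3=2^{j-j(1+\varepsilon)}=2^{-j\varepsilon}$ and $L_j\le 2^{-j/3}$,
\[
s_j=N_jL_j< 4\cdot 2^jL_j^3+L_j\le 4\cdot 2^{-j\varepsilon}+2^{-j/3}.
\]
Summing the two geometric series shows $a^\varepsilon<\infty$. There is no deeper obstacle: the content of the argument is the bookkeeping of the grid packing together with the observation that the per-generation slab height $s_j$ decays geometrically; the only thing to watch is that the ceiling correction $+1$ in $N_j$ (equivalently, the wasted space in the last, partially filled layer of each generation) also contributes a summable term, namely $\sum_j L_j\le\sum_j 2^{-j/3}<\infty$.
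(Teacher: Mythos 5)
Your proof is correct, and it takes a genuinely different route from the paper. The paper constructs the corners greedily, one cube at a time (fill a row until it overflows, start a new row, then a new layer), and the entire technical burden falls on showing that the sequence of layer-starting indices $\CS_\cl$ grows fast enough that the layer heights $\CH_\cl=\CS_\cl^{-(1+\varepsilon)/3}$ are summable; this requires a chain of recurrence estimates (lower bounds on $\CM_\cn$, $\CB_r^\cn$, $\CR_\cn$, $\CN_\cn$, $\CS_\cl$, via a lemma on sequences satisfying $\cx_{\ck+1}\geq\cx_\ck+\rho\cx_\ck^\alpha$) and a preliminary reduction to $\varepsilon<1/2$. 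You instead group the cubes into dyadic generations, over-allot each cube in generation $j$ a grid cell of the generation's maximal side $L_j=2^{-j(1+\varepsilon)/3}$, and reduce the height bound to the explicit geometric series $\sum_j\bigl(4\cdot2^{-j\varepsilon}+2^{-j/3}\bigr)$. All the steps check out: the base-$n_j$ decomposition of $r$ is well defined because $r<2^j\leq N_jn_j^2$, the inequality $\lfloor x\rfloor>x/2$ for $x\geq1$ gives $n_j^2>L_j^{-2}/4$, and the degenerate generation $j=0$ is handled. What the paper's greedy packing buys is a tighter arrangement (no wasted cell space), but that is irrelevant to the statement; what your argument buys is uniformity in $\varepsilon>0$ with no case split and a proof that is essentially a two-line computation once the grid bookkeeping is set up. It is a cleaner proof of the same lemma.
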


\begin{remark}[Dyadic partition]
It is possible to obtain infinitely many resonant frequencies with
simpler arrangements of cubes. For instance, one could employ a dyadic
partition of the unit interval with cubes $\{Q_{\ck}\}_{\ck \geq 1}$ of
size $2^{-\ck}$. However, in this case, the distribution of the eigenvalues
is much sparser than what is predicted by Weyl's law, since the number of
resonant frequencies less than a given $\lambda \geq 0$ would scale as
$\log_4\lambda$.
\end{remark}

\section{Main result}
\label{section_result}

As already advertised, our coefficient is built by rescaling the reference
coefficient $\hAAA$ of Proposition~\ref{proposition_reference_coefficient}
into the cubes provided by Lemma~\ref{lemma_cube_packing}. Its precise
construction and key properties are given in Lemma~\ref{lemma_coefficient}.

\begin{lemma}[Coefficient]
\label{lemma_coefficient}
For $\ck \geq 1$, let denote by $F_{\ck}^\varepsilon$ the affine mapping between
$\hQ$ and $Q_{\ck}^{\varepsilon}$ as per~\eqref{eq_affine_map}. We then define
$\AAA^\varepsilon: \R^3 \to \R^{3 \times 3}$ as follows. Let $\bx \in \R^3$.
If there exists $\ck \geq 1$ such that $\bx \in Q_{\ck}^\varepsilon$, then we let
\begin{equation}
\label{eq_definition_coefficient}
\AAA^\varepsilon(\bx) \eq \hAAA((F_{\ck}^\varepsilon)^{-1}(\bx))
\end{equation}
and let $\AAA^\varepsilon(\bx) \eq 1$ otherwise. Then,
$\AAA^\varepsilon$ is uniformly bounded and elliptic. Besides,
$\AAA^\varepsilon \in \LLL^\infty(\R^3)$ but
$\AAA^\varepsilon \notin \CCC^0(\R^3)$. In fact,
if $0 < s < 1$ and $p \geq 2$, $\AAA^\varepsilon \notin \WWW^{s,p}(Q^\varepsilon)$
whenever $sp \geq 3\varepsilon/(1+\varepsilon)$.
\end{lemma}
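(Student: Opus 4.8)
The plan is to verify the four assertions in turn: uniform boundedness and ellipticity, membership in $\LLL^\infty$, failure of continuity, and the quantitative failure of fractional Sobolev regularity. The first two are essentially immediate from the construction. Since the cubes $\{Q_\ck^\varepsilon\}_{\ck\geq 1}$ are pairwise disjoint by Lemma~\ref{lemma_cube_packing}, the value $\AAA^\varepsilon(\bx)$ is unambiguously defined: either $\bx$ lies in exactly one $Q_\ck^\varepsilon$ and $\AAA^\varepsilon(\bx)=\hAAA((F_\ck^\varepsilon)^{-1}(\bx))$, or $\bx$ lies in none and $\AAA^\varepsilon(\bx)=\III$. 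In the first case $(F_\ck^\varepsilon)^{-1}(\bx)\in\hQ$, so $\AAA^\varepsilon(\bx)$ is a value of $\hAAA$; in the second case it equals $\III$. Hence $\AAA^\varepsilon$ takes values in the union of the essential range of $\hAAA$ with $\{\III\}$, and the uniform ellipticity and boundedness constants of $\hAAA$ from Proposition~\ref{proposition_reference_coefficient} (which are at least as strong as those of $\III$) pass to $\AAA^\varepsilon$. Measurability is clear since on each $Q_\ck^\varepsilon$ it is a composition of the measurable $\hAAA$ with a smooth bijection, and the complement of $\bigcup_\ck Q_\ck^\varepsilon$ is measurable; thus $\AAA^\varepsilon\in\LLL^\infty(\R^3)$.

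For the failure of continuity, the idea is that $\hAAA$ is \emph{not} identically $\III$ (otherwise $\hu$ would solve $-\Delta\hu=\hlambda\hu$ in $\R^3$ with compact support, contradicting e.g.\ unique continuation / analyticity), so there is some $\hby\in\hQ$ with $\hAAA(\hby)\neq\III$; one may even use continuity of $\hAAA$ to fix such a point where $\hAAA$ is continuous and differs from $\III$ by a definite amount. Pulling this point back through each $F_\ck^\varepsilon$ gives points $\by_\ck\eq F_\ck^\varepsilon(\hby)\in Q_\ck^\varepsilon$ with $\AAA^\varepsilon(\by_\ck)$ bounded away from $\III$. Since $\ell_\ck=\ck^{-(1+\varepsilon)/3}\to 0$ and all the $Q_\ck^\varepsilon$ lie in the bounded set $Q^\varepsilon$, the corners $\bx_\ck^\varepsilon$ accumulate at some point $\bx_\star\in\overline{Q^\varepsilon}$, and $\by_\ck\to\bx_\star$ as well. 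On the other hand, the cubes being disjoint and shrinking forces $\bx_\star$ to lie outside every $Q_\ck^\varepsilon$ for $\ck$ large (or on a boundary), so along a suitable sequence $\AAA^\varepsilon$ equals $\III$ arbitrarily close to $\bx_\star$. The two sequences approaching $\bx_\star$ with values bounded away from each other show $\AAA^\varepsilon\notin\CCC^0(\R^3)$.

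The quantitative statement is the heart of the lemma, and the main obstacle. The strategy is a scaling argument combined with the Gagliardo seminorm. On the cube $Q_\ck^\varepsilon$ the function $\AAA^\varepsilon$ is the pullback of the fixed nontrivial tensor $\hAAA-\III$ under $F_\ck^\varepsilon$, which is a dilation by $\ell_\ck$ followed by a translation. Under such a dilation, the Gagliardo $W^{s,p}$-seminorm of a function on a cube of side $\ell$ scales like $\ell^{3-sp}$ relative to the reference-cube seminorm (the factor $\ell^3$ from the two spatial integrals in the Gagliardo double integral, the factor $\ell^{-sp}$ from the $|\bx-\by|^{-3-sp}$ kernel after change of variables). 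Concretely, if $c_0\eq |\hAAA-\III|_{W^{s,p}(\hQ)}^p>0$ — which is positive precisely because $\hAAA-\III$ is not a.e.\ constant — then $|\AAA^\varepsilon-\III|_{W^{s,p}(Q_\ck^\varepsilon)}^p = c_0\,\ell_\ck^{3-sp} = c_0\,\ck^{-(1+\varepsilon)(3-sp)/3}$. Because the cubes are pairwise disjoint, the full Gagliardo seminorm over $Q^\varepsilon$ dominates the sum of these contributions over $\ck$, so
\[
|\AAA^\varepsilon|_{W^{s,p}(Q^\varepsilon)}^p \;\geq\; c_0 \sum_{\ck\geq 1} \ck^{-(1+\varepsilon)(3-sp)/3}.
\]
This series diverges exactly when $(1+\varepsilon)(3-sp)/3\leq 1$, i.e.\ when $sp\geq 3\varepsilon/(1+\varepsilon)$, which is the claimed threshold; hence in that regime $\AAA^\varepsilon\notin\WWW^{s,p}(Q^\varepsilon)$. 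The delicate points to get right are: (i) justifying that the Gagliardo double integral over $Q^\varepsilon\times Q^\varepsilon$ is bounded below by the sum of the diagonal-block integrals over $Q_\ck^\varepsilon\times Q_\ck^\varepsilon$ (this is just positivity of the integrand and disjointness, so the blocks are disjoint subsets of the domain of integration); (ii) confirming $c_0>0$, for which one argues that if the Gagliardo seminorm of $\hAAA-\III$ on $\hQ$ vanished then $\hAAA-\III$ would be constant on $\hQ$, hence zero by its boundary behavior, again contradicting the existence of the compactly supported eigenfunction; and (iii) the exact bookkeeping of the scaling exponent, using $(\JJJ F_\ck^\varepsilon) = \ell_\ck^{-1}\III$ from~\eqref{eq_jacobian} and the change of variables $\bx = F_\ck^\varepsilon(\hbx)$, $\by = F_\ck^\varepsilon(\hby)$ in $\int\int |\AAA^\varepsilon(\bx)-\AAA^\varepsilon(\by)|^p |\bx-\by|^{-3-sp}\,d\bx\,d\by$, noting $|\bx-\by| = \ell_\ck^{-1}|\hbx-\hby|$ wait — with $F$ as in~\eqref{eq_affine_map} one has $|\bx-\by| = \ell^{-1}|\hbx-\hby|$, so care with the direction of the scaling is needed, but either way the net exponent collected is $3 - sp$ as a power of $\ell_\ck$ after accounting for the two volume elements $d\bx\,d\by = \ell_\ck^{-6}\,d\hbx\,d\hby$ and the kernel; I would write this out carefully once to pin down the sign.
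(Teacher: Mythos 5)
Your proposal is correct and follows essentially the same route as the paper: measurability via the countable disjoint decomposition, discontinuity from the fixed jump $\hAAA(\hby)\neq\III$ reproduced at points accumulating in the bounded set $Q^\varepsilon$ (the paper phrases this as failure of uniform continuity, but the substance is identical), and non-membership in $\WWW^{s,p}(Q^\varepsilon)$ by scaling the Gagliardo seminorm on each cube and summing the divergent series $\sum_\ck \ck^{-(1+\varepsilon)(3-sp)/3}$. Your hesitation about the direction of the scaling comes from a typo in~\eqref{eq_affine_map} (it should read $F(\hbx)=\bx+\ell\hbx$ so that $F(\hQ)=Q(\bx,\ell)$); your resolution --- net factor $\ell_\ck^{\,3-sp}$ in the side length of the physical cube --- is the correct one and matches the paper's conclusion.
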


\begin{proof}
Since the cubes $\{Q_{\ck}^\varepsilon\}_{\ck \geq 1}$ from Lemma~\ref{lemma_cube_packing}
are non-overlapping, the definition in~\eqref{eq_definition_coefficient} indeed properly defines
$\AAA^\varepsilon$. It is also clear that $\AAA^\varepsilon$ is uniformly bounded and
elliptic since $\hAAA$ is. To show that $\AAA^\varepsilon \in \LLL^\infty(\R^3)$ it remains
to show that it is measurable, and we proceed as follows.
Consider a measurable set $V \subset \R^{3 \times 3}$. If
$\III \in V$, we let $U_0 \eq \R^3 \setminus \bigcup_{\ck} Q_{\ck}^\varepsilon$
and $U_0 \eq \emptyset$ otherwise. Then, we have
\begin{equation*}
(\AAA^\varepsilon)^{-1}(V)
=
U_0 \cup \bigcup_{\ck \geq 1} (\hAAA \circ (F_{\ck}^\varepsilon)^{-1})^{-1}(V).
\end{equation*}
We first observe that $U_0$ is measurable. Besides, each $\hAAA \circ (F_{\ck}^\varepsilon)^{-1}$
is measurable, meaning that each set in the countable union is measurable. This shows
that $(\AAA^\varepsilon)^{-1}(V)$ is measurable, which completes the proof.

To show that $\AAA^\varepsilon$ is not continuous, we first show that it is not uniformly
continuous. To do so, we consider two reference points $\hbx,\hby \in \R^3$ such that
$\hAAA(\hbx) \neq \hAAA(\hby)$. Such points necessarily exist, since
Proposition~\ref{proposition_reference_coefficient} could not hold if $\hAAA$ was constant
due to the unique continuation principle, see e.g.~\cite{burq_1998a}.

For $\ck \geq 1$, let us now consider the physical points
$\bx_\ck \eq T_{\ck}^\varepsilon(\bx)$ and $\by_\ck \eq T_{\ck}^\varepsilon(\by)$.
Clearly, we have
\begin{equation*}
|\bx_{\ck}-\by_{\ck}| \leq \ck^{-(1+\varepsilon/3)} |\hbx-\hby|
\end{equation*}
while
\begin{equation*}
|\AAA^\varepsilon(\bx_{\ck})-\AAA^\varepsilon(\by_{\ck})| = |\hAAA(\hbx)-\hAAA(\hby)| > 0.
\end{equation*}
As a result,
\begin{equation*}
|\AAA^\varepsilon(\bx_{\ck})-\AAA^\varepsilon(\by_{\ck})|
\geq
\frac{|\hAAA(\hbx)-\hAAA(\hby)|}{|\hbx-\hby|}
\ck^{(1+\varepsilon)/3} |\bx_{\ck}-\by_{\ck}|.
\end{equation*}
This shows that $\AAA^\varepsilon$ is not uniformly continuous.
Because $\AAA^\varepsilon$ is constant outside a bounded set,
it follows that $\AAA^\varepsilon$ is not continuous, see e.g.~\cite[Theorem 4.19]{rudin_1976a}.

The proof that $\AAA^\varepsilon \notin \WWW^{s,p}(Q^\varepsilon)$ under the above
assumption is similar. Since $\hAAA \in \CCC^{0,s}(\R^3)$, we have $\hAAA \in \WWW^{s,p}(\hQ)$.
We then note that that because $\hAAA$ is not constant, its semi-norm
\begin{equation*}
|\hAAA|_{\WWW^{s,p}(\hQ)}^p
\eq
\int_{\hby \in \hQ}\int_{\hbx \in \hQ}
\frac{|\hAAA(\hbx)-\hAAA(\hby)|^p}{|\hbx-\hby|^{sp+3}}
d\hbx d\hby
=
\widehat{c} > 0
\end{equation*}
does not vanish. A simple scaling argument then tells us that for all $\ck \geq 1$, we have
\begin{equation*}
|\AAA^\varepsilon|_{\WWW^{s,p}(Q_{\ck}^\varepsilon)}^p
=
\widehat{c}
\left (\ck^{-(1+\varepsilon)/3}\right )^{3-sp}
=
\widehat{c}\ck^\theta
\end{equation*}
with
\begin{equation*}
\theta \eq -\frac{1-\varepsilon}{3}(3-sp)
\end{equation*}
The corresponding series converges when we sum over $\ck \geq 1$ iif $\theta < -1$,
which is equivalent to the condition
\begin{equation*}
sp < \frac{3\varepsilon}{1+\varepsilon}.
\end{equation*}
\end{proof}

We are now ready to establish our main result, whereby we construct an
infinite number of eigenpairs, with all eigenfunctions support in the
same compact set. The associating counting function essentially saturates
Weyl's law.

\begin{theorem}[Compactly supported eigenfunctions]
Fix $\varepsilon > 0$. Then, for all $\ck \geq 1$,
there exists $u_{\ck} \in H^1(\R^3)$ with
$\supp u_{\ck}^\varepsilon \subset Q_{\ck}^\varepsilon \subset Q^\varepsilon$
such that
\begin{subequations}
\label{eq_eigen_modes}
\begin{equation}
-\div(\AAA^\varepsilon \grad u_{\ck}^\varepsilon) = \lambda_{\ck}^\varepsilon u_{\ck}
\end{equation}
with
\begin{equation}
\lambda_{\ck}^\varepsilon \eq \hlambda \ck^{2(1+\varepsilon)/3}.
\end{equation}
\end{subequations}
In particular, we have
\begin{equation}
\label{eq_counting_function}
\sharp
\left \{
\lambda_{\ck}^\varepsilon \leq \lambda \; | \; \ck \geq 1
\right \}
=
\left \lfloor
\left (\frac{\lambda}{\hlambda}\right )^{\frac{2}{3(1+\varepsilon)}}
\right \rfloor.
\end{equation}
\end{theorem}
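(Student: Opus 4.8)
The plan is to transplant the reference eigenpair $(\hlambda, \hu)$ from Proposition~\ref{proposition_reference_coefficient} into each cube $Q_{\ck}^\varepsilon$ via the affine map $F_{\ck}^\varepsilon$, and then to verify that the pullback is an eigenfunction of the transplanted operator with the scaled eigenvalue. Concretely, for each $\ck \geq 1$ I would define
\begin{equation*}
u_{\ck}^\varepsilon(\bx) \eq
\begin{cases}
\hu\bigl((F_{\ck}^\varepsilon)^{-1}(\bx)\bigr) & \text{if } \bx \in Q_{\ck}^\varepsilon,\\
0 & \text{otherwise.}
\end{cases}
\end{equation*}
Since $\supp \hu \subset \hQ$ and $F_{\ck}^\varepsilon$ maps $\hQ$ bijectively onto $Q_{\ck}^\varepsilon$, the support of $u_{\ck}^\varepsilon$ lies inside $Q_{\ck}^\varepsilon$, hence inside $Q^\varepsilon$ by Lemma~\ref{lemma_cube_packing}. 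The fact that $u_{\ck}^\varepsilon \in H^1(\R^3)$ follows because $\hu \in H^1(\R^3)$ with support compactly contained in $\hQ$, so the pullback vanishes in a neighborhood of $\partial Q_{\ck}^\varepsilon$ and extends by zero without creating a jump; a change-of-variables argument using the Jacobian~\eqref{eq_jacobian} gives the $H^1$ bound.

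The core computation is the change of variables in the PDE. Writing $\ell_{\ck} = \ck^{-(1+\varepsilon)/3}$ for the side length, the map $F_{\ck}^\varepsilon$ has constant Jacobian $\ell_{\ck}^{-1}\III$ (per~\eqref{eq_affine_map}--\eqref{eq_jacobian}; note the scaling factor is $1/\ell_{\ck}$, so in fact $F_{\ck}^\varepsilon$ shrinks $\hQ$ by $\ell_\ck$ after the translation, and its inverse is $\bx \mapsto \ell_\ck(\bx - \bx_\ck^\varepsilon)$). Under this pullback, $\grad$ transforms with a factor $\ell_{\ck}^{-1}$ and $\div$ likewise, so $\div(\AAA^\varepsilon \grad u_{\ck}^\varepsilon)$ at a point $\bx = F_{\ck}^\varepsilon(\hbx)$ equals $\ell_{\ck}^{-2}\,\bigl(\div(\hAAA\grad\hu)\bigr)(\hbx)$, using crucially that $\AAA^\varepsilon$ restricted to $Q_{\ck}^\varepsilon$ is exactly $\hAAA \circ (F_{\ck}^\varepsilon)^{-1}$ by~\eqref{eq_definition_coefficient}. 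Invoking~\eqref{eq_reference_coefficient}, this is $-\ell_{\ck}^{-2}\hlambda\,\hu(\hbx) = -\ell_{\ck}^{-2}\hlambda\, u_{\ck}^\varepsilon(\bx)$. Since $\ell_{\ck}^{-2} = \ck^{2(1+\varepsilon)/3}$, this is exactly the claimed eigenvalue $\lambda_{\ck}^\varepsilon = \hlambda\,\ck^{2(1+\varepsilon)/3}$. Outside $Q_{\ck}^\varepsilon$ both sides vanish (here one uses that $\supp u_{\ck}^\varepsilon$ is compactly inside $Q_{\ck}^\varepsilon$, so there is no contribution from $\partial Q_{\ck}^\varepsilon$), so~\eqref{eq_eigen_modes} holds in $\R^3$, understood weakly in $H^1(\R^3)$.

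Finally, the counting identity~\eqref{eq_counting_function} is purely arithmetic: $\lambda_{\ck}^\varepsilon \leq \lambda$ is equivalent to $\ck \leq (\lambda/\hlambda)^{2/(3(1+\varepsilon))}$, and since $\ck$ ranges over positive integers the number of such $\ck$ is $\lfloor (\lambda/\hlambda)^{2/(3(1+\varepsilon))} \rfloor$; because $\ck \mapsto \lambda_{\ck}^\varepsilon$ is strictly increasing, there is no multiplicity ambiguity. I would also remark that this exponent $2/(3(1+\varepsilon)) = (1-\varepsilon')/\cdot$-type rate tends to $2/3$ as $\varepsilon \to 0^+$, so the frequency counting function $\omega \mapsto \sharp\{\ck : (\lambda_{\ck}^\varepsilon)^{1/2} \leq \omega\}$ scales like $\omega^{4/(3(1+\varepsilon))} \cdot$--- wait, more carefully, in terms of $\omega = \sqrt\lambda$ it scales like $\omega^{4/(3(1+\varepsilon))}$, which approaches $\omega^{4/3}$; to recover the $\omega^{3-\varepsilon}$ claim of the abstract one instead tracks the frequencies $\omega_\ck = \sqrt{\lambda_\ck^\varepsilon}$ so that $\sharp\{\omega_\ck \leq \omega\} = \lfloor (\omega^2/\hlambda)^{2/(3(1+\varepsilon))}\rfloor$, and for $\varepsilon$ small this exponent $4/(3(1+\varepsilon))$ can be relabeled. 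The main obstacle I anticipate is not the eigenvalue computation, which is a routine affine change of variables, but rather the careful justification that the zero-extended pullback genuinely lies in $H^1(\R^3)$ and that the eigenvalue equation holds weakly across $\partial Q_{\ck}^\varepsilon$; this is where the hypothesis $\supp\hu \subset \hQ$ (with the closure strictly inside, or at least the extension-by-zero being $H^1$) does the essential work.
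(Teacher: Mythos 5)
Your construction is exactly the paper's: transplant $(\hlambda,\hu)$ into each $Q_{\ck}^\varepsilon$ via the affine map, apply the chain rule with the constant Jacobian, and read off the eigenvalue $\hlambda\,\ell_{\ck}^{-2}=\hlambda\,\ck^{2(1+\varepsilon)/3}$. Your extra care about the zero extension across $\partial Q_{\ck}^\varepsilon$ (using that $\supp\hu$ is compactly contained in $\hQ$) is correct and in fact more explicit than the paper's one-line justification; you also correctly diagnose that the formula~\eqref{eq_affine_map} should scale by $\ell_{\ck}$ rather than $1/\ell_{\ck}$ for $F_{\ck}^\varepsilon$ to map $\hQ$ onto a cube of side $\ell_{\ck}$, and you land on the right eigenvalue regardless.

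The one genuine slip is in the counting step. From $\lambda_{\ck}^\varepsilon=\hlambda\,\ck^{2(1+\varepsilon)/3}\le\lambda$ one gets $\ck\le(\lambda/\hlambda)^{3/(2(1+\varepsilon))}$, not $\ck\le(\lambda/\hlambda)^{2/(3(1+\varepsilon))}$: you have inverted the exponent when solving for $\ck$. The formula~\eqref{eq_counting_function} as printed carries the same inversion (it is a typo in the statement), and your closing digression is the symptom rather than a resolution: with the inverted exponent the frequency count scales as $\omega^{4/(3(1+\varepsilon))}\to\omega^{4/3}$, which cannot be reconciled with the advertised $\omega^{3-\varepsilon}$, whereas with the correct exponent it scales as $\omega^{3/(1+\varepsilon)}\to\omega^{3}$, which is exactly the claimed near-saturation of Weyl's law. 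You should therefore state and prove the counting identity with exponent $3/(2(1+\varepsilon))$; the rest of your argument (strict monotonicity of $\ck\mapsto\lambda_{\ck}^\varepsilon$, hence no multiplicity ambiguity, and the floor of the threshold giving the exact count) is the same as the paper's and is fine.
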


\begin{proof}
The proof of~\eqref{eq_eigen_modes} relies on a simple scaling argument. For each $\ck \geq 1$,
we let $u_{\ck} \eq u \circ (F_{\ck}^\varepsilon)^{-1}$ where $F_{\ck}^\varepsilon$
is the affine map from Lemma~\ref{lemma_coefficient}. It is then clear that
$u_{\ck} \in H^1(\R^3)$ with $\supp u_{\ck} \subset Q_{\ck}^\varepsilon \subset Q^\varepsilon$.
A simple consequence of~\eqref{eq_jacobian} is that
\begin{equation*}
\JJJ((F_{\ck}^\varepsilon)^{-1}) = \ck^{-(1+\varepsilon)/3}\III.
\end{equation*}
Then,~\eqref{eq_eigen_modes} follows by the chain rule since
\begin{equation*}
-\div(\AAA^\varepsilon \grad u_{\ck})
=
-\ck^{-(1+\varepsilon)2/3} \left \{\div(\hAAA \grad \hu)\right \} \circ (F_{\ck}^\varepsilon)^{-1}
=
\ck^{(1+\varepsilon)3/2} \hlambda u_{\ck}.
\end{equation*}

The proof of~\eqref{eq_counting_function} is also fairly straightforward.
Let $\lambda > 0$. If $\ck_\star \geq 1$ is such that $\lambda_{\ck_\star} \leq \lambda$,
then there are (at least) $\ck_\star$ eigenvalues smaller than $\lambda$.
The estimate in~\eqref{eq_counting_function} by picking the largest $\ck_\star$
such that $\lambda_{\ck_\star} \leq \lambda$.
\end{proof}

As a direct consequence, we obtain a familiy of ill-posed scattering problems.

\begin{corollary}[Ill-posed scattering problem]
There exists coefficients $\mu \in L^\infty(\R^d)$ and $\AAA \in \LLL^\infty(\R^d)$
\begin{equation*}
\mu(\bx) = 1 \qquad \AAA(\bx) = \III
\end{equation*}
whenever $|\bx| \geq R$ with the property that the scattering problem of finding
$u \in H^1_{\rm loc}(\R^3)$ such that
\begin{equation*}
\left \{
\begin{array}{rcll}
-\omega^2 \mu u-\div(\AAA\grad u) &=& f & \text{ in } \R^3
\\
\partial_r u -i\omega u &=& o(|\bx|^{-1}) & \text{ as } |\bx| \to +\infty
\end{array}
\right .
\end{equation*}
for a given $f \in L^2_{\rm c}(\R^3)$ is not well-posed whenever
there exists $\ck \geq 1$ such that
\begin{equation*}
\omega\diam = \widehat{c} \ck^{(1+\varepsilon)/3},
\end{equation*}
where $\widehat{c}$ is a generic constant.
\end{corollary}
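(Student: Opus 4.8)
\emph{Proof proposal.}
The corollary is essentially the preceding theorem rephrased in scattering language, so the plan is short. I would take $\mu \equiv 1$ and $\AAA \eq \AAA^\varepsilon$ from Lemma~\ref{lemma_coefficient}. Since $Q^\varepsilon = (0,1)\times(0,1)\times(0,a^\varepsilon)$ is bounded, after translating the coefficient and the eigenfunctions $u_\ck^\varepsilon$ by a common vector there is $\diam > 0$ with $Q^\varepsilon \subset \{|\bx| < \diam\}$; as $\AAA^\varepsilon = \III$ and $\mu = 1$ outside $\bigcup_{\ck \geq 1} Q_\ck^\varepsilon \subset Q^\varepsilon$, the coefficients have the announced structure, namely $\mu(\bx) = 1$ and $\AAA(\bx) = \III$ for $|\bx| \geq \diam$.

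Next I would fix $\ck \geq 1$, set $\omega \eq (\lambda_\ck^\varepsilon)^{1/2} = \hlambda^{1/2}\ck^{(1+\varepsilon)/3}$, and take $u \eq u_\ck^\varepsilon$, the eigenfunction produced by the theorem. Then $u \in H^1(\R^3) \subset H^1_{\rm loc}(\R^3)$, $u \not\equiv 0$, $\supp u \subset Q_\ck^\varepsilon$ is compact, and $-\omega^2 \mu u - \div(\AAA\grad u) = -\div(\AAA^\varepsilon\grad u_\ck^\varepsilon) - \lambda_\ck^\varepsilon u_\ck^\varepsilon = 0$ in $\R^3$. Because $u$ vanishes for $|\bx|$ large, both $\partial_r u$ and $i\omega u$ vanish there, so the Sommerfeld condition $\partial_r u - i\omega u = o(|\bx|^{-1})$ holds trivially. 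Hence $u$ and $0$ are two distinct solutions of the homogeneous problem ($f = 0$), uniqueness fails, and the scattering problem is not well-posed at this frequency. If one additionally wants failure of existence, one observes that the truncated weak formulation of the scattering problem is a compact perturbation of a coercive problem, hence Fredholm of index zero, so a nontrivial kernel forces a nontrivial cokernel and existence fails for a suitable $f \in L^2_{\rm c}(\R^3)$.

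It then remains to read off the frequency--size relation: with $\diam$ as above, $\omega\diam = \hlambda^{1/2}\diam\,\ck^{(1+\varepsilon)/3} = \widehat{c}\,\ck^{(1+\varepsilon)/3}$ for the generic constant $\widehat{c} \eq \hlambda^{1/2}\diam$, which is precisely the stated condition; letting $\ck$ range over all $\ck \geq 1$ produces the claimed countable family of resonant frequencies, which accumulates at infinity with counting function governed by~\eqref{eq_counting_function}.

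I do not foresee a genuine obstacle: once a precise notion of well-posedness is fixed, the corollary follows immediately from the theorem. The only points deserving a line of justification are that a compactly supported field automatically satisfies the radiation condition, and the bookkeeping identifying $\widehat{c}$ in terms of $\hlambda$ and the radius $\diam$; the optional existence-failure remark additionally relies on the standard Fredholm structure of the truncated scattering problem, which is unaffected by the roughness of the coefficients.
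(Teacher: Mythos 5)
Your proposal is correct and matches the paper's intent: the paper offers no explicit proof, presenting the corollary as a direct consequence of the theorem, and your argument (compactly supported eigenfunction trivially satisfies the Sommerfeld condition, hence uniqueness fails at $\omega = (\lambda_{\ck}^\varepsilon)^{1/2}$, with $\widehat{c} = \hlambda^{1/2}\diam$) is exactly the intended one. The optional Fredholm remark about failure of existence is a correct and standard supplement, though not needed for the statement as given.
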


\section{Cube packing}
\label{section_cube}

%% TODO: Figure here?

This section establishes Lemma~\ref{lemma_cube_packing}. For all $\varepsilon > 0$,
we show that we can pack a countable set of cubes
$\{ Q(\bx_{\ck}^\varepsilon,\ck^{-(1+\varepsilon)/3}) \}_{\ck \geq 1}$ with properly
selected corners $\{\bx_{\ck}^\varepsilon\}_{\ck \geq 1} \subset \R^3$ in a bounded
set without overlaps. This result seems intuitively true since the total
volumes of such a set of cubes is finite. However, this consideration on
the volume does not guarantee on its own that we can pack the cubes in
a bounded domain. Here, we rigorously show that this is indeed the case.

\subsection{Construction of the corners}

Roughly speaking, our construction simply amounts to packing the cubes
along lines, then arranging such lines together in squares, and finally
stacking these layers on top of each other.

It will be convenient to set $\ell_{\ck} \eq \ck^{-(1+\varepsilon)/3}$ for $\ck \geq 1$.
We let $\bx_1^\varepsilon = \bzero$, and, for $\ck \geq 1$,
\begin{equation}
\label{eq_cube_construction}
\arraycolsep=0.5pt
\bx_{\ck+1}
\eq
\left |
\begin{array}{rrrrrrll}
(&\bx_{\ck,1}^\varepsilon + \ell_{\ck},&\bx_{\ck,2}^\varepsilon&,&\bx_{\ck,3}^\varepsilon&)
&
\text{ if }
\bx_{\ck,1}^\varepsilon + \ell_{\ck} + \ell_{\ck+1} \leq 1,\text{ else}
\\
(&0,&\bx_{\ck,2}^\varepsilon& + \ell_{\ck},&\bx_{\ck,3}^\varepsilon&)
&
\text{ if }
\bx_{\ck,2}^\varepsilon + \ell_{\ck} + \ell_{\ck+1} \leq 1,\text{ and}
\\
(&0,&&0,&\bx_{\ck,3}^\varepsilon&+\ell_{\ck})
&
\text{ otherwise.}
\end{array}
\right .
\end{equation}

It is easily seen that the $\{\bx_{\ck}^\varepsilon\}_{\ck \geq 1}$ generated
by~\eqref{eq_cube_construction} are sufficiently far apart that
the cubes do not overlaps. By construction, it is clear that
$0 \leq \bx_{\ck,j}^\varepsilon-\ell_\ck/2$ and $\bx_{\ck,j}^\varepsilon + \ell_\ck \leq 1$
for $j=1$ and $2$. We also obviously have $\bx_{\ck,3}^\varepsilon \geq 0$,
so that the only thing that remains to be checked is that there exists
$a_\varepsilon \in \R$ such that $\bx_{\ck,3}^\varepsilon+\ell_\ck \leq a_\varepsilon$
for all $\ck \geq 1$.

\subsection{Plan of the proof}

We are in fact going to show that the construction introduced in~\eqref{eq_cube_construction}
provides a satisfactory answers for $\varepsilon < 1/2$. If $\varepsilon \geq 1/2$, we can simply
use the corners obtained by~\eqref{eq_cube_construction} for any value of
$\widetilde \varepsilon < 1/2$, say $\widetilde \varepsilon \eq 1/4$. Indeed, we then have
$Q(\bx_{\ck}^\varepsilon,\ck^{-(1+\varepsilon)/3}) \subset Q(\bx_{\ck}^\varepsilon,\ck^{-(1+\widetilde \varepsilon)/3})$
since $\ck^{-(1+\varepsilon)/3} \leq \ck^{-(1+\widetilde \varepsilon)/3}$, and the packing obtained
for the bigger cubes obviously works for the smaller cubes.
In the remainder, we thus fix $\varepsilon < 1/2$.

We let $\CS_1 \eq 1$, and we denote by $1 < \CS_2 < \CS_3 < \dots$ the indices
for which
\begin{equation*}
\cy_{\CS_{\cl}} = (0,0,\cy_{\CS_{\cl}-1,3}+\ell_{\CS_{\cl}-1}), \quad \cl > 1.
\end{equation*}
For $\cl \geq 1$, $\CS_{\cl}$ is thus the index of the cube that starts the $\cl^{\rm th}$
vertical layer in the construction. Note that there is indeed an infinite number of
layers, since the area of the faces of the cube is infinite. Indeed,
\begin{equation*}
\sum_{\ck \geq 1} \ell_{\ck}^2
=
\sum_{\ck \geq 1} \ck^{-(2+2\varepsilon)/3}
\geq
\sum_{\ck \geq 1} \ck^{-1}
=
+\infty
\end{equation*}
as $\varepsilon \leq 1/2$. The height of the $\cl^{\rm th}$ layer is then given by
\begin{equation}
\label{eq_cube_definition_CH}
\CH_\cl \eq (\CS_\cl)^{-(1+\varepsilon)/3}
\end{equation}
and Lemma~\ref{lemma_cube_packing} holds true if
\begin{equation*}
\sum_{\cl \geq 1} \CH_\cl < +\infty.
\end{equation*}

Our goal is therefore to give lower bounds for $\CS_\cl$, for $\cl \geq \cl_\star$
sufficiently large. To that end, we will characterize explicitly $\{\CS_\cl\}_{\cl \geq 1}$
as a recurrent sequence, and we proceed as follows.

First, for $\cn \leq \cm$, we let
\begin{equation}
\label{eq_cube_definition_CL}
\CL_{\cn}^{\cm} \eq \sum_{\ck=\cn}^{\cm} \ck^{-(1+\varepsilon)/3}.
\end{equation}
denote the length required to arrange cubes of size $\{\ell_\ck\}_{\ck=\cn}^{\cm}$
in a straight line by gluing them face to face. For $\cn \in \N$, the number $\CM_{\cn}$
of cubes of size $\{\ell_{\ck}\}_{\ck=\cn}^{\CM_{\cn}}$ that can be arranged in a line of
length $1$ is then given by
\begin{equation}
\label{eq_cube_definition_CM}
\CM_{\cn} \eq \max \{ \cm \geq \cn \; | \; \LL_{\cn}^{\cm} \leq 1 \} - \cn + 1.
\end{equation}

Then, for a fixed cube $Q_{\cn}^\varepsilon$, $\cn \in \N$, we denote by
\begin{equation}
\label{eq_cube_definition_CB}
\CB^{\cn}_1 \eq \cn, \qquad \CB^{\cn}_{r+1} = \CB^{\cn}_r + \CM_{\CB^{\cn}_r}
\end{equation}
the index of the cube at start of $r^{\rm th}$ row if the square is initiated
with the cube $Q_{\cn^\varepsilon}$. As a result,
\begin{equation}
\label{eq_cube_definition_CW}
\CW_{\cn}^{R}
\eq
\sum_{r=1}^R (\CB^{\cn}_r)^{-(1+\varepsilon)/3}
\end{equation}
is the width occupied by $R$ rows starting at the cube $Q_{\cn}^\varepsilon$, and
we further denote by
\begin{equation}
\label{eq_cube_definition_CR}
\CR_{\cn} \eq \max \{R \geq 1 \; | \; \CW_{\cn}^R \leq 1 \}
\end{equation}
is the number of rows than fit in the unit square starting from $Q_{\cn}^\varepsilon$.
The number of cubes that constitute those rows is then
\begin{equation}
\label{eq_cube_definition_CN}
\CN_{\cn} \eq \CB^{\cn}_{\CR_{\cn}+1}-\cn+1.
\end{equation}
Equipped with this definition, we can write that
\begin{equation}
\label{eq_cube_definition_CS}
\CS_{\cl+1} = \CS_\cl + \CN_{\CS_{\cl}}
\end{equation}
for $\cl \geq 1$.

\subsection{Preliminary results}

For completeness, we provide here some elementary result on recurrent sequences.

\begin{proposition}[Sum bound]
\label{proposition_cube_sum_bound}
Consider two integers $\cn,\cm \in \Z$ with $\cn \leq \cm$,
and a non-increasing function $f: C^0([\cn-1,\cm],\R)$.
Then, we have
\begin{equation}
\label{eq_cube_sum_bound}
\sum_{\ck = \cn}^\cm f(\ck) \leq \int_{\ck-1}^{\cm} f(\tau) d\tau.
\end{equation}
\end{proposition}

\begin{proof}
The proof relies on the simple observation that the sum can be rewritten has
\begin{equation*}
\sum_{\ck = \cn}^\cm f(\ck) = \int_{\ck-1}^{\cm} f(\lceil \tau \rceil) d\tau.
\end{equation*}
Then, the conclusion follows from the fact that $f(\lceil \tau \rceil) \leq f(\tau)$
due to the monotonicity assumption on $f$.
\end{proof}

\begin{proposition}[Recurrent sequence]
\label{proposition_cube_bound_sequence}
Fix $\ck_\star \in \N$, $\rho > 0$ as well as $0 < \alpha < 1$,
and consider the sequence $\{\cx_\ck\}_{\ck \geq \ck_\star}$ such that
\begin{equation*}
\cx_{\ck+1} \geq \cx_{\ck} + \rho \cx_{\ck}^\alpha
\end{equation*}
for all $\ck \geq \ck_\star$. For all $0 < \eta < 1$, there exists
$\mu_\star(\alpha,\rho,\eta) > 0$ such that if
$x_{\ck_\star} \geq \mu_\star(\alpha,\rho,\eta)$, then
\begin{equation}
\label{eq_cube_bound_sequence}
\cx_\ck
\geq
\left (
(1-\alpha)(1-\eta) \rho (\ck-\ck_\star)+\cx_{\ck_\star}^{1-\alpha}
\right )^{1/(1-\alpha)}
\end{equation}
for all $\ck \geq \ck_\star$.
\end{proposition}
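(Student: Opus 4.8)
The plan is to compare the discrete recursion with the ODE $\dot x = \rho x^\alpha$, whose solution through $x(\ck_\star) = \cx_{\ck_\star}$ is exactly $\big((1-\alpha)\rho(\ck-\ck_\star) + \cx_{\ck_\star}^{1-\alpha}\big)^{1/(1-\alpha)}$; the factor $(1-\eta)$ in the statement is the price we pay for discretization, and it can be made to cost arbitrarily little by starting from a large enough value $\cx_{\ck_\star} \geq \mu_\star$. Concretely, I would work with the quantity $y_\ck \eq \cx_\ck^{1-\alpha}$ and show that it grows at least linearly: the target inequality~\eqref{eq_cube_bound_sequence} is precisely $y_\ck \geq (1-\alpha)(1-\eta)\rho(\ck-\ck_\star) + y_{\ck_\star}$, so it suffices to prove $y_{\ck+1} - y_\ck \geq (1-\alpha)(1-\eta)\rho$ for every $\ck \geq \ck_\star$, and then sum telescopically.

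The main step is therefore the one-step estimate. Since $\cx$ is nondecreasing (because $\rho\cx_\ck^\alpha \geq 0$) and $\cx_{\ck_\star} \geq \mu_\star$, we have $\cx_\ck \geq \mu_\star$ for all $\ck \geq \ck_\star$. Writing $\cx_{\ck+1} = \cx_\ck(1 + \rho\cx_\ck^{\alpha-1})$ and setting $t_\ck \eq \rho\cx_\ck^{\alpha-1} \in (0, \rho\mu_\star^{\alpha-1}]$, we get
\begin{equation*}
y_{\ck+1} = \cx_\ck^{1-\alpha}(1+t_\ck)^{1-\alpha} = y_\ck\,(1+t_\ck)^{1-\alpha}.
\end{equation*}
By the mean value theorem (or convexity of $s \mapsto (1+s)^{1-\alpha}$ from below by its tangent line fails; instead use concavity), $(1+t)^{1-\alpha} \geq 1 + (1-\alpha)t - C_\alpha t^2$ for $0 \leq t \leq 1$ with $C_\alpha$ depending only on $\alpha$; more simply, a Taylor expansion with Lagrange remainder gives $(1+t)^{1-\alpha} \geq 1 + (1-\alpha)t(1 - \alpha t/2)$ for $t$ small, hence $(1+t)^{1-\alpha} \geq 1 + (1-\alpha)(1-\eta)t$ as soon as $t \leq \delta(\alpha,\eta)$ for a suitable threshold. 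Therefore
\begin{equation*}
y_{\ck+1} - y_\ck \geq y_\ck \cdot (1-\alpha)(1-\eta) t_\ck = (1-\alpha)(1-\eta)\,\cx_\ck^{1-\alpha}\cdot \rho\cx_\ck^{\alpha-1} = (1-\alpha)(1-\eta)\rho,
\end{equation*}
provided $t_\ck = \rho\cx_\ck^{\alpha-1} \leq \delta(\alpha,\eta)$, which holds for all $\ck$ once $\cx_{\ck_\star} \geq \mu_\star \eq (\rho/\delta(\alpha,\eta))^{1/(1-\alpha)}$. This fixes the choice of $\mu_\star(\alpha,\rho,\eta)$.

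Finally, summing $y_{\ck+1} - y_\ck \geq (1-\alpha)(1-\eta)\rho$ from $\ck_\star$ to $\ck-1$ yields $y_\ck \geq y_{\ck_\star} + (1-\alpha)(1-\eta)\rho(\ck-\ck_\star)$, and raising to the power $1/(1-\alpha)$ gives~\eqref{eq_cube_bound_sequence}. The only delicate point is the elementary inequality $(1+t)^{1-\alpha} \geq 1 + (1-\alpha)(1-\eta)t$ for small $t$; I expect this to be the main (minor) obstacle, and it is handled purely by a one-variable Taylor estimate with the threshold on $t$ absorbing the loss of $\eta$. Everything else is bookkeeping: monotonicity of $\cx_\ck$, the substitution $y_\ck = \cx_\ck^{1-\alpha}$, and a telescoping sum.
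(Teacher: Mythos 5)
Your proof is correct, and it rests on the same underlying idea as the paper's: compare the recursion with the ODE $\dot x=\rho x^{\alpha}$ and absorb the discretization loss into the factor $(1-\eta)$ by requiring $\cx_{\ck_\star}\geq\mu_\star$, the key ingredient in both cases being a first-order Taylor inequality with an $\eta$-loss valid below a threshold on the small parameter, together with the monotonicity of $\{\cx_\ck\}$ to keep that parameter below the threshold. The implementations differ slightly: the paper defines the right-hand side of~\eqref{eq_cube_bound_sequence} as an explicit comparison sequence $\cy_\ck$, shows it is a subsolution of the recursion via the bound $(1+\tau)^{1/(1-\alpha)}\leq 1+\tfrac{1}{(1-\eta)(1-\alpha)}\tau$ for small $\tau$, and concludes by induction; you instead linearize via the substitution $y_\ck=\cx_\ck^{1-\alpha}$, prove the uniform increment bound $y_{\ck+1}-y_\ck\geq(1-\alpha)(1-\eta)\rho$ from the inverse inequality $(1+t)^{1-\alpha}\geq 1+(1-\alpha)(1-\eta)t$, and telescope. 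Your route avoids the induction against an auxiliary sequence and makes the choice of $\mu_\star$ completely explicit, which is arguably cleaner. Two cosmetic points: the hypothesis gives only $\cx_{\ck+1}\geq\cx_\ck(1+\rho\cx_\ck^{\alpha-1})$, not equality, so you should write $y_{\ck+1}\geq y_\ck(1+t_\ck)^{1-\alpha}$ (harmless since $s\mapsto s^{1-\alpha}$ is increasing); and the parenthetical remark about convexity versus concavity should be cleaned up, since the correct justification is the one you give afterwards, namely the Lagrange remainder $(1+t)^{1-\alpha}\geq 1+(1-\alpha)t-\tfrac{\alpha(1-\alpha)}{2}t^2$.
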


\begin{proof}
Define the sequence
\begin{equation*}
\cy_\ck
\eq
\left (
(1-\alpha)(1-\eta) \rho (\ck-\ck_\star)+\cx_{\ck_\star}^{1-\alpha}
\right )^{1/(1-\alpha)}
\end{equation*}
for $\ck \geq \ck_\star$. We first observe that $\cy_{\ck_\star} = \cx_{\ck_\star}$.
Besides, it is clear that the sequence $\{\cy_{\ck}\}_{\ck \geq \ck_\star}$ increases
so that $\cy_{\ck} \geq \cx_{\ck_\star}$ for all $\ck \geq \ck_\star$.
Elementary computations reveal that
\begin{equation*}
\cy_{\ck+1}
=
\cy_\ck
\left (
1+ (1-\alpha)(1-\eta) \rho \cy_\ck^{\alpha-1}
\right )^{1/(1-\alpha)}.
\end{equation*}

At that point, if we can choose $\mu_\star(\alpha,\rho,\eta)$ in such a way that
\begin{equation*}
\cy_{\ck+1}
\leq
\cy_\ck
\left (
1+ \rho \cy_\ck^{\alpha-1}
\right )
=
\cy_{\ck} + \rho \cy_\ck^\alpha,
\end{equation*}
then, the conclusion follows by induction. It therefore remains to properly select
$\mu_\star(\alpha,\rho,\eta)$.

We start by considering $\nu_\star(\eta) \in \R$ such that
\begin{equation*}
(1+\tau)^{1/(1-\alpha)} \leq 1 + \frac{1}{1-\eta} \frac{1}{1-\alpha} \tau
\end{equation*}
for all $\tau \leq \nu_\star(\eta)$, and we define
\begin{equation*}
\mu_\star(\alpha,\rho,\eta)
\eq 
\left (\frac{\nu_\star(\eta)}{(1-\alpha)(1-\eta)}\right )^{1/(\alpha-1)}.
\end{equation*}
At that point, we simply need to check that $\cx_{\ck_\star} \geq \mu_\star(\alpha,\rho,\eta)$
guarantees that
\begin{equation*}
(1-\alpha)(1-\eta) \rho \cy_\ck^{\alpha-1}
\leq
\nu_\star(\eta).
\end{equation*}
However, since the sequence $\cy_\ck \geq \cx_{\ck_\star}$ for $\ck \geq \ck_\star$
and $\alpha < 1$, we have
\begin{equation*}
(1-\alpha)(1-\eta) \rho \cy_\ck^{\alpha-1}
\leq
(1-\alpha)(1-\eta) \rho \cx_{\ck_\star}^{\alpha-1}
\leq
(1-\alpha)(1-\eta) \rho \mu_\star(\alpha,\rho,\eta)^{\alpha-1}
=
\nu_\star(\eta).
\end{equation*}
\end{proof}

The following simple inequalities will be useful
\begin{equation}
\label{eq_cube_lower_bound_1ta}
(1+\tau)^\alpha \geq 1+\alpha\tau
\end{equation}
for all $\alpha \geq 1$ and $\tau \geq 0$. The bound
\begin{equation}
\label{eq_cube_upper_bound_1ta}
(1+\tau)^\alpha \leq 1+\alpha\tau
\end{equation}
also holds true for $0 \leq \alpha \leq 1$ and $\tau \geq 0$.

\subsection{Proof of Lemma~\ref{lemma_cube_packing}}

With these preliminary results established, we can proceed with the main proof.

\begin{lemma}[Lower bound on $\CM_{\cn}$]
\label{lemma_cube_CM}
There exists $\cn_\star(\varepsilon) \in \N$ such that
\begin{equation}
\label{eq_cube_bound_CM}
\CM_{\cn} \geq (1-\varepsilon) \cn^{(1+\varepsilon)/3}
\end{equation}
for all $\cn \geq \cn_\star(\varepsilon)$.
\end{lemma}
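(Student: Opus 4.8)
The plan is to estimate $\CM_\cn$ by bounding the line-length functional $\CL_\cn^\cm$ defined in~\eqref{eq_cube_definition_CL}. Recall that $\CM_\cn$ is, by~\eqref{eq_cube_definition_CM}, essentially the largest $\cm$ such that $\CL_\cn^\cm \leq 1$, so a lower bound on $\CM_\cn$ follows from an \emph{upper} bound on $\CL_\cn^\cm$ as a function of $\cm$. First I would apply Proposition~\ref{proposition_cube_sum_bound} to the non-increasing function $f(\tau) = \tau^{-(1+\varepsilon)/3}$, which gives
\begin{equation*}
\CL_\cn^\cm = \sum_{\ck=\cn}^\cm \ck^{-(1+\varepsilon)/3} \leq \int_{\cn-1}^\cm \tau^{-(1+\varepsilon)/3}\, d\tau = \frac{3}{2-\varepsilon}\left((\cm)^{(2-\varepsilon)/3} - (\cn-1)^{(2-\varepsilon)/3}\right),
\end{equation*}
using that $(1+\varepsilon)/3 < 1$ so the exponent $1 - (1+\varepsilon)/3 = (2-\varepsilon)/3$ is positive. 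Setting $\cm = \cn + \lceil (1-\varepsilon)\cn^{(1+\varepsilon)/3} \rceil - 1$ (the candidate value coming from~\eqref{eq_cube_bound_CM}), I would verify that the right-hand side is $\leq 1$ for $\cn$ large enough, which yields $\CM_\cn \geq (1-\varepsilon)\cn^{(1+\varepsilon)/3}$ by definition of $\CM_\cn$.

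The key estimate to carry out is therefore: with $\cm \approx \cn + c\,\cn^{(1+\varepsilon)/3}$ where $c = 1-\varepsilon$, show $\cm^{(2-\varepsilon)/3} - (\cn-1)^{(2-\varepsilon)/3} \leq \frac{2-\varepsilon}{3}$ for all large $\cn$. I would factor $\cm^{(2-\varepsilon)/3} = \cn^{(2-\varepsilon)/3}(1 + c\,\cn^{-(2-\varepsilon)/3} + o(\cn^{-(2-\varepsilon)/3}))^{(2-\varepsilon)/3}$ and apply the inequality~\eqref{eq_cube_upper_bound_1ta} with exponent $\alpha = (2-\varepsilon)/3 < 1$ to get
\begin{equation*}
\cm^{(2-\varepsilon)/3} \leq \cn^{(2-\varepsilon)/3}\left(1 + \frac{2-\varepsilon}{3}\, c\, \cn^{-(2-\varepsilon)/3}(1 + o(1))\right) = \cn^{(2-\varepsilon)/3} + \frac{2-\varepsilon}{3}(1-\varepsilon)(1+o(1)).
\end{equation*}
Similarly $(\cn-1)^{(2-\varepsilon)/3} \geq \cn^{(2-\varepsilon)/3} - \frac{2-\varepsilon}{3}\cn^{-(1+\varepsilon)/3}$ by~\eqref{eq_cube_lower_bound_1ta}-type reasoning (or just monotonicity), so the difference is at most $\frac{2-\varepsilon}{3}(1-\varepsilon)(1+o(1))$, which is strictly less than $\frac{2-\varepsilon}{3}$ for $\cn$ beyond some threshold $\cn_\star(\varepsilon)$ since $1-\varepsilon < 1$. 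This is exactly where the slack of the factor $(1-\varepsilon)$ rather than $1$ is consumed — it provides the gap that absorbs the lower-order error terms from the integral bound and from the ceiling function.

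The main obstacle is purely bookkeeping: tracking the additive rounding from the $\lceil \cdot \rceil$ in the definition of $\cm$, the $(\cn-1)$ versus $\cn$ discrepancy in the integral's lower limit, and the $o(1)$ terms in the binomial expansions, and confirming they are all dominated by the fixed multiplicative gap $\varepsilon \cdot \frac{2-\varepsilon}{3}$ once $\cn \geq \cn_\star(\varepsilon)$. There is no conceptual difficulty — the estimate $\CL_\cn^{\cn + c\cn^{(1+\varepsilon)/3}} \to \frac{3}{2-\varepsilon}\cdot\frac{2-\varepsilon}{3}\cdot c = c < 1$ in the limit makes the result transparent — but one must be careful that all error terms decay (they are powers $\cn^{-(1+\varepsilon)/3}$ or $\cn^{-(2-\varepsilon)/3}$, both negative exponents) so that a uniform threshold $\cn_\star(\varepsilon)$ exists.
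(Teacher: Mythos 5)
Your proposal is correct and takes essentially the same route as the paper: both proofs bound $\CL_\cn^\cm$ from above by the integral via Proposition~\ref{proposition_cube_sum_bound} and then extract the leading-order growth with a first-order binomial inequality, yours using~\eqref{eq_cube_upper_bound_1ta} (exponent $<1$) on the candidate $\cm$, the paper instead solving $L(\tau_\star)=1$ exactly and applying~\eqref{eq_cube_lower_bound_1ta} (exponent $>1$) to the solution. The only difference is organizational — you plug in the target value of $\cm$ and verify, the paper inverts $L$ and expands — and both absorb the lower-order errors into the slack of the factor $1-\varepsilon$ exactly as you describe.
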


\begin{proof}
We start by invoking~\eqref{eq_cube_sum_bound}, giving
\begin{equation*}
\CL_{\cn}^{\cm}
\leq
L(\cm)
\eq
\frac{3}{2-\varepsilon} (\cm^{(2-\varepsilon)/3}-(\cn-1)^{(2-\varepsilon)/3}).
\end{equation*}

The function $\R_+ \ni \tau \to L(\tau) \in \R$ is increasing.
Besides, it is also easy to check that $L(\tau_\star) = 1$ for
\begin{equation*}
\tau_\star \eq \left (\frac{2-\varepsilon}{3}+(\cn-1)^{(2-\varepsilon)/3}\right )^{3/(2-\varepsilon)}.
\end{equation*}
It follows that setting $\cm_\star \eq \lfloor \tau_\star \rfloor \geq x_\star-1$,
we have $\CL_{\cn}^{\cm_\star} \leq 1$. By definition of $\CM_{\cn}$, we have
\begin{equation*}
\CM_{\cn}
\geq
\cm_\star
\geq
\left (\frac{2-\varepsilon}{3}+(\cn-1)^{(2-\varepsilon)/3}\right )^{3/(2-\varepsilon)} - \cn.
\end{equation*}
Since $3/(2-\varepsilon) > 1$, we can apply~\eqref{eq_cube_lower_bound_1ta}, and we have
\begin{align*}
\left (\frac{2-\varepsilon}{3}+(\cn-1)^{(2-\varepsilon)/3}\right )^{3/(2-\varepsilon)}
&=
(\cn-1)
\left (1+\frac{2-\varepsilon}{3}(\cn-1)^{-(2-\varepsilon)/3}\right )^{3/(2-\varepsilon)}
\\
&\geq
(\cn-1)\left (1+(\cn-1)^{-(2-\varepsilon)/3}\right ),
\end{align*}
so that
\begin{equation*}
\CM_{\cn}
\geq
(\cn-1)^{(1+\varepsilon)/3}-1
=
\left (
\left ( 1-\frac{1}{\cn} \right )^{1+\varepsilon/3}
-
\cn^{-(1+\varepsilon)/3}
\right )
\cn^{(1+\varepsilon)/3}
\end{equation*}
and the conclusion follows by selecting $\cn_\star(\varepsilon) \in \N$ such that
\begin{equation*}
\left ( 1-\frac{1}{\cn_\star(\varepsilon)} \right )^{1+\varepsilon/3}
-
\cn_\star(\varepsilon)^{-(1+\varepsilon)/3}
\geq
1-\varepsilon.
\end{equation*}
\end{proof}

\begin{lemma}[Lower bound on $\CB_r^{\cn}$]
\label{lemma_cube_CB}
There exists $\cn_\star(\varepsilon) \in \N$ such that
\begin{equation}
\label{eq_cube_bound_CB}
\CB_r^{\cn} \geq \left (
\frac{2}{3}(1-\varepsilon)^2(r-1)+\cn^{(2-\varepsilon)/3}
\right )^{3/(2-\varepsilon)}
\end{equation}
for all $\cn \geq \cn_\star(\varepsilon)$ and $r \geq 1$.
\end{lemma}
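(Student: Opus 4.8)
The plan is to read the recursion~\eqref{eq_cube_definition_CB} defining $\CB^{\cn}_r$ as a concrete instance of the abstract recurrent inequality of Proposition~\ref{proposition_cube_bound_sequence}, with the growth fed in by the lower bound of Lemma~\ref{lemma_cube_CM}. First I would set $\alpha \eq (1+\varepsilon)/3$ and $\rho \eq 1-\varepsilon$. Since $\varepsilon < 1/2$, we have $0 < \alpha < 1$ and $\rho > 0$, and $1-\alpha = (2-\varepsilon)/3$, so that $1/(1-\alpha) = 3/(2-\varepsilon)$, which is exactly the exponent appearing in~\eqref{eq_cube_bound_CB}.

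Next I would compare $\CB^{\cn}_r$ with the model sequence. Fix $\cn$ and write $\cx_r \eq \CB^{\cn}_r$. Because $\ell_\cm \leq 1$ for every $\cm \geq 1$, one has $\CM_\cm \geq 1$, so $\{\cx_r\}_{r \geq 1}$ is strictly increasing with $\cx_1 = \cn$; in particular $\cx_r \geq \cn$ for all $r$. Provided $\cn$ is at least the threshold supplied by Lemma~\ref{lemma_cube_CM}, estimate~\eqref{eq_cube_bound_CM} applies at each term $\cx_r$ and gives
\begin{equation*}
\cx_{r+1} = \cx_r + \CM_{\cx_r} \geq \cx_r + (1-\varepsilon)\cx_r^{(1+\varepsilon)/3} = \cx_r + \rho\,\cx_r^{\alpha} \qquad \text{for all } r \geq 1,
\end{equation*}
which is precisely the hypothesis of Proposition~\ref{proposition_cube_bound_sequence} with $\ck_\star = 1$.

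The one quantitative choice to make is the parameter $\eta$ in Proposition~\ref{proposition_cube_bound_sequence}, tuned so that its conclusion reproduces the constant $\frac{2}{3}(1-\varepsilon)^2$. Taking $\eta \eq \varepsilon/(2-\varepsilon)$, which lies in $(0,1)$ since $0 < \varepsilon < 1$, we get $1-\eta = (2-2\varepsilon)/(2-\varepsilon)$ and hence
\begin{equation*}
(1-\alpha)(1-\eta)\rho = \frac{2-\varepsilon}{3}\cdot\frac{2-2\varepsilon}{2-\varepsilon}\cdot(1-\varepsilon) = \frac{2}{3}(1-\varepsilon)^2 .
\end{equation*}
With $\eta$ so fixed, Proposition~\ref{proposition_cube_bound_sequence} yields a threshold $\mu_\star(\alpha,\rho,\eta)$; I would then take the $\cn_\star(\varepsilon)$ of the present lemma to be the maximum of the threshold of Lemma~\ref{lemma_cube_CM} and $\lceil \mu_\star(\alpha,\rho,\eta) \rceil$ (a harmless enlargement of the constant carrying the same name). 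For $\cn \geq \cn_\star(\varepsilon)$ we then have $\cx_1 = \cn \geq \mu_\star(\alpha,\rho,\eta)$, so~\eqref{eq_cube_bound_sequence} applies with $\cx_{\ck_\star}^{1-\alpha} = \cn^{(2-\varepsilon)/3}$ and $\ck - \ck_\star = r-1$, yielding exactly~\eqref{eq_cube_bound_CB}.

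I do not expect a genuine obstacle here: everything reduces to the two preliminary propositions. The only points requiring a little care are (i) that the monotonicity of $\{\cx_r\}$ keeps every iterate in the range where Lemma~\ref{lemma_cube_CM} is valid, so that the recurrent inequality holds for \emph{every} $r$ and not merely the first few, and (ii) that the starting value $\cx_1 = \cn$ can be forced above $\mu_\star(\alpha,\rho,\eta)$ — both handled automatically by enlarging $\cn_\star(\varepsilon)$.
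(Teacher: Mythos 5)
Your proof is correct and follows the same route as the paper: feed the lower bound of Lemma~\ref{lemma_cube_CM} into the recursion~\eqref{eq_cube_definition_CB}, note that monotonicity keeps every iterate above $\cn$, and apply Proposition~\ref{proposition_cube_bound_sequence} with $\ck_\star=1$, $\rho=1-\varepsilon$, $\alpha=(1+\varepsilon)/3$. The only difference is your choice $\eta=\varepsilon/(2-\varepsilon)$, which makes $(1-\alpha)(1-\eta)\rho$ equal exactly $\tfrac{2}{3}(1-\varepsilon)^2$; the paper takes $\eta=\varepsilon$, which produces the slightly smaller constant $\tfrac{2-\varepsilon}{3}(1-\varepsilon)^2$, so your tuning actually reproduces the stated bound more precisely than the paper's own proof does.
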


\begin{proof}
Let us consider $\cn$ large enough that Lemma~\ref{lemma_cube_CM} applies.
Since $\CM_{\cn} \geq 1$, the sequence $\{\CB_r^{\cn}\}_{r \geq 1}$ is increasing.
Therefore, $\CB_r^{\cn} \geq \cn$ and we can apply~\eqref{eq_cube_bound_CM} for all $r \geq 1$.
This leads to
\begin{equation*}
\CB^{\cn}_{r+1}
\geq
\CB^{\cn}_r + (1-\varepsilon) \left (\CB^{\cn}_r\right )^{(1+\varepsilon)/3}
\end{equation*}
for all $r \geq 1$. Since $\CB_1^{\cn} = \cn$, we can then chose $\cn_\star(\varepsilon) \in \N$
so that Proposition~\ref{proposition_cube_bound_sequence} applies with $\ck_\star = 1$,
$\rho = 1-\varepsilon$, $\alpha = (1+\varepsilon)/3$ and $\eta = \varepsilon$.
Then,~\eqref{eq_cube_bound_sequence} provides~\eqref{eq_cube_bound_CB}, since then
$1-\alpha = (2-\varepsilon)/3$.
\end{proof}

\begin{lemma}[Upper bound on $\CW_{\cn}^R$]
There exists $\cn_\star(\varepsilon) \in \N$ such that
\begin{equation}
\label{eq_cube_bound_CW}
\CW_{\cn}^R \leq (R+\varepsilon)\cn^{-(1+\varepsilon)/3}
\quad
\forall R \geq 1
\end{equation}
for all $\cn \geq \cn_\star(\varepsilon)$.
\end{lemma}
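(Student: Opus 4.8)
The plan is to bound $\CW_{\cn}^R$ from above by bounding each term $(\CB_r^{\cn})^{-(1+\varepsilon)/3}$ in the defining sum~\eqref{eq_cube_definition_CW}. Since we have a lower bound on $\CB_r^{\cn}$ from Lemma~\ref{lemma_cube_CB}, and since $x \mapsto x^{-(1+\varepsilon)/3}$ is decreasing, we immediately get
\begin{equation*}
(\CB_r^{\cn})^{-(1+\varepsilon)/3}
\leq
\left (
\frac{2}{3}(1-\varepsilon)^2(r-1)+\cn^{(2-\varepsilon)/3}
\right )^{-(1+\varepsilon)/(2-\varepsilon)}
\end{equation*}
for $\cn \geq \cn_\star(\varepsilon)$. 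Summing over $r = 1, \dots, R$, the first term ($r=1$) contributes exactly $\cn^{-(1+\varepsilon)/3}$, and for the remaining terms I would compare the sum to an integral. Writing $g(r) \eq \left (\frac{2}{3}(1-\varepsilon)^2(r-1)+\cn^{(2-\varepsilon)/3}\right)^{-(1+\varepsilon)/(2-\varepsilon)}$, which is decreasing in $r$, Proposition~\ref{proposition_cube_sum_bound} gives $\sum_{r=2}^R g(r) \leq \int_1^R g(\tau)\,d\tau$.

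Next I would evaluate that integral. Since the exponent $-(1+\varepsilon)/(2-\varepsilon)$ is strictly less than $-1$ when $\varepsilon < 1/2$ (indeed $(1+\varepsilon)/(2-\varepsilon) > 1 \iff 1+\varepsilon > 2-\varepsilon \iff \varepsilon > 1/2$ — wait, this is the wrong direction), I need to be careful: for $\varepsilon < 1/2$ we have $(1+\varepsilon)/(2-\varepsilon) < 1$, so the antiderivative is a positive power and the integral does \emph{not} converge as $R \to \infty$. So instead I bound $\int_1^R g(\tau)\,d\tau$ directly. With $c \eq \frac{2}{3}(1-\varepsilon)^2$ and $b \eq \cn^{(2-\varepsilon)/3}$, the antiderivative of $(c(\tau-1)+b)^{-(1+\varepsilon)/(2-\varepsilon)}$ is $\frac{1}{c}\cdot\frac{2-\varepsilon}{1-2\varepsilon}(c(\tau-1)+b)^{(1-2\varepsilon)/(2-\varepsilon)}$, so
\begin{equation*}
\int_1^R g(\tau)\,d\tau
=
\frac{3}{2(1-\varepsilon)^2}\cdot\frac{2-\varepsilon}{1-2\varepsilon}
\left (
(c(R-1)+b)^{(1-2\varepsilon)/(2-\varepsilon)}
- b^{(1-2\varepsilon)/(2-\varepsilon)}
\right ).
\end{equation*}
This is where the main difficulty lies: the claimed bound~\eqref{eq_cube_bound_CW} has the very clean form $(R+\varepsilon)\cn^{-(1+\varepsilon)/3}$, linear in $R$ and with a prefactor $\cn^{-(1+\varepsilon)/3}$, whereas the integral estimate produces a sublinear power $R^{(1-2\varepsilon)/(2-\varepsilon)}$ times a worse $\cn$-dependence. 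I would reconcile these by noting that each individual term $g(r) \leq g(1) = \cn^{-(1+\varepsilon)/3}$ (since $g$ is decreasing and its value at $r=1$ is exactly $\cn^{-(1+\varepsilon)/3}$), hence the crude bound $\CW_{\cn}^R = \sum_{r=1}^R g(r) \leq R\,\cn^{-(1+\varepsilon)/3}$. That already gives~\eqref{eq_cube_bound_CW} without the $\varepsilon$; the extra $+\varepsilon$ slack is harmless and presumably there only because it will be convenient in the downstream estimate of $\CR_{\cn}$.

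So the clean route is: (i) invoke Lemma~\ref{lemma_cube_CB} and monotonicity of $x\mapsto x^{-(1+\varepsilon)/3}$ to get each summand $\leq \cn^{-(1+\varepsilon)/3}$; (ii) sum $R$ copies; (iii) absorb the harmless $+\varepsilon$. The only thing requiring care is making sure the $\cn_\star(\varepsilon)$ here is taken at least as large as the one from Lemma~\ref{lemma_cube_CB}, so that the lower bound on $\CB_r^{\cn}$ is in force — though in fact, since $\CM_{\cn}\geq 1$ already forces $\CB_r^{\cn}\geq\cn$ directly from the recursion~\eqref{eq_cube_definition_CB}, we even get $g(r)\leq\cn^{-(1+\varepsilon)/3}$ \emph{without} needing the refined Lemma~\ref{lemma_cube_CB} at all, only that $\cn$ be large enough for Lemma~\ref{lemma_cube_CM} to guarantee $\CM_{\cn}\geq 1$. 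I expect the ``main obstacle'' is purely presentational: deciding whether to route through the integral bound (needed for the sharper downstream bound on the number of rows $\CR_{\cn}$, which controls $\CW_{\cn}^{R+1}>1$) or to give the trivial linear bound here and defer the sharp work. For proving~\eqref{eq_cube_bound_CW} as stated, the trivial bound suffices and I would present that.
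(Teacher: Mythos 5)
Your final argument is correct, but it takes a genuinely different --- and more elementary --- route than the paper. The paper substitutes the lower bound \eqref{eq_cube_bound_CB} on $\CB_r^{\cn}$ into the sum \eqref{eq_cube_definition_CW}, compares the sum to an integral via Proposition~\ref{proposition_cube_sum_bound}, computes the antiderivative explicitly, and then linearizes the two boundary terms with $(1+\tau)^\alpha \leq 1+\alpha\tau$ to land exactly on $(R+\varepsilon)\cn^{-(1+\varepsilon)/3}$; so your concern that the integral route cannot be reconciled with the stated linear-in-$R$ form is unfounded --- the paper deliberately weakens its sublinear integral estimate back to a linear one, and the $+\varepsilon$ is an artifact of that weakening rather than a downstream necessity. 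Your shortcut --- $\CB_1^{\cn}=\cn$ and $\CM\geq 1$ make $r\mapsto\CB_r^{\cn}$ nondecreasing, hence every summand is at most $\cn^{-(1+\varepsilon)/3}$ and $\CW_{\cn}^R\leq R\,\cn^{-(1+\varepsilon)/3}\leq(R+\varepsilon)\,\cn^{-(1+\varepsilon)/3}$ --- is valid, gives a strictly stronger bound than \eqref{eq_cube_bound_CW}, requires neither Lemma~\ref{lemma_cube_CB} nor any integral comparison, and in fact holds for every $\cn\geq 1$ with no threshold $\cn_\star(\varepsilon)$ at all. Since the only downstream use of this lemma is the lower bound \eqref{eq_cube_bound_CR} on $\CR_{\cn}$, which your version delivers with room to spare, nothing is lost by discarding the paper's sharper (sublinear in $R$) intermediate estimate. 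The one presentational caveat is that your write-up reaches this conclusion only after a detour through the integral computation; the clean proof is just steps (i)--(iii) of your final summary.
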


\begin{proof}
Consider $R \in \N$. Recalling the definition of $\CW_{\cn}^R$ in~\eqref{eq_cube_definition_CW}
and the lower bound on~$\CB_r^{\cn}$ established in~\eqref{eq_cube_bound_CB}, we have
\begin{equation*}
\CW_{\cn}^R
\leq
\sum_{r=1}^R
\left (
\frac{2}{3}(1-\varepsilon)^2(r-1)+\cn^{(2-\varepsilon)/3}
\right )^{-(1+\varepsilon)/(2-\varepsilon)}.
\end{equation*}
Since the summand is a decreasing function of $r$, Proposition~\ref{proposition_cube_sum_bound},
and~\eqref{eq_cube_sum_bound} says that
\begin{equation}
\label{tmp_cube_int_bound}
\CW_{\cn}^R \leq F(R)-F(0),
\end{equation}
where, for $\tau \geq 0$,
\begin{equation*}
F(\tau)
\eq
\frac{3}{2} \frac{1}{(1-\varepsilon)^2} \frac{2-\varepsilon}{1-2\varepsilon}
\left (
\frac{2}{3}(1-\varepsilon)^2(\tau-1)+\cn^{(2-\varepsilon)/3}
\right )^{(1-2\varepsilon)/(2-\varepsilon)}
\end{equation*}
is the anti-derivative of the summand. We can further rewrite $F$ as
\begin{equation}
\label{tmp_cube_F_G}
F(\tau)
=
\frac{3}{2} \frac{1}{(1-\varepsilon)^2} \frac{2-\varepsilon}{1-2\varepsilon}
\cn^{(1-2\varepsilon)/3}
G(\tau),
\end{equation}
with
\begin{equation*}
G(\tau) \eq \left (
1+\frac{2}{3}(1-\varepsilon)^2(\tau-1)\cn^{-(2-\varepsilon)/3}
\right )^{(1-2\varepsilon)/(2-\varepsilon)}
\end{equation*}
for all $\tau \geq 0$.

On the one hand, we have the bound
\begin{equation}
\label{tmp_cube_G0}
G(0)
=
\left (
1-\frac{2}{3}(1-\varepsilon)^2\cn^{-(2-\varepsilon)/3}
\right )^{(1-2\varepsilon)/(2-\varepsilon)}
\geq
1
-
(1+\varepsilon)
\frac{1-2\varepsilon}{2-\varepsilon}
\frac{2}{3}(1-\varepsilon)^2
\cn^{-(2-\varepsilon)/3}.
\end{equation}
On the other hand, we invoke~\eqref{eq_cube_lower_bound_1ta}, which gives
\begin{equation}
\label{tmp_cube_GR}
G(R)
\leq
1
+
\frac{1-2\varepsilon}{2-\varepsilon}
\frac{2}{3}(1-\varepsilon)^2
(R-1)
\cn^{-(2-\varepsilon)/3}.
\end{equation}
Putting~\eqref{tmp_cube_int_bound},~\eqref{tmp_cube_F_G},~\eqref{tmp_cube_G0} and~\eqref{tmp_cube_GR}
together gives~\eqref{eq_cube_bound_CW}.
\end{proof}

\begin{lemma}[Lower bound on $\CR_{\cn}$]
There exists $\cn_\star(\varepsilon) \in \N$ such that
\begin{equation}
\label{eq_cube_bound_CR}
\CR_\cn \geq (1-\varepsilon) \cn^{(1+\varepsilon)/3}
\end{equation}
for all $\cn \geq \cn_\star(\varepsilon)$.
\end{lemma}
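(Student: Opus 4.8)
The plan is to read off~\eqref{eq_cube_bound_CR} directly from the upper bound~\eqref{eq_cube_bound_CW} on $\CW_\cn^R$ together with the definition~\eqref{eq_cube_definition_CR} of $\CR_\cn$. Indeed $\CR_\cn = \max\{R \geq 1 \; | \; \CW_\cn^R \leq 1\}$, so it suffices to exhibit a single admissible value of $R$ which is at least $(1-\varepsilon)\cn^{(1+\varepsilon)/3}$. Since the ceiling will appear, the natural candidate is $R_\cn \eq \lceil (1-\varepsilon)\cn^{(1+\varepsilon)/3} \rceil$, for which $R_\cn \geq (1-\varepsilon)\cn^{(1+\varepsilon)/3}$ holds trivially.

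First I would fix $\cn_\star(\varepsilon) \in \N$ large enough that~\eqref{eq_cube_bound_CW} applies, and check that $R_\cn \geq 1$ for $\cn \geq \cn_\star(\varepsilon)$; this follows from $\varepsilon < 1/2$, which gives $(1-\varepsilon)\cn^{(1+\varepsilon)/3} \geq 1$ once $\cn$ is large. Next I would insert the elementary bound $R_\cn \leq (1-\varepsilon)\cn^{(1+\varepsilon)/3}+1$ into~\eqref{eq_cube_bound_CW} to obtain
\begin{equation*}
\CW_\cn^{R_\cn} \leq (R_\cn+\varepsilon)\cn^{-(1+\varepsilon)/3} \leq (1-\varepsilon) + (1+\varepsilon)\cn^{-(1+\varepsilon)/3},
\end{equation*}
and observe that the right-hand side is at most $1$ exactly when $\cn^{(1+\varepsilon)/3} \geq (1+\varepsilon)/\varepsilon$, which holds for all $\cn$ past a threshold depending only on $\varepsilon$. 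Folding this threshold into $\cn_\star(\varepsilon)$, we get $\CW_\cn^{R_\cn} \leq 1$, hence by~\eqref{eq_cube_definition_CR} we conclude $\CR_\cn \geq R_\cn \geq (1-\varepsilon)\cn^{(1+\varepsilon)/3}$ for all $\cn \geq \cn_\star(\varepsilon)$, which is precisely~\eqref{eq_cube_bound_CR}.

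I do not expect a genuine obstacle here: the statement is essentially a one-line consequence of the previous lemma once the test value $R_\cn$ is chosen. The only delicate point is that the ceiling in $R_\cn$ contributes an additive $+1$ that must be absorbed into the "$\varepsilon$-slack" of~\eqref{eq_cube_bound_CW}; this is exactly why the bound is asserted only for $\cn$ large, and it forces $\cn_\star(\varepsilon)$ to be taken of order $(1+\varepsilon)^{3/(1+\varepsilon)}/\varepsilon^{3/(1+\varepsilon)}$. As throughout this section, $\cn_\star(\varepsilon)$ is understood to be the maximum of the finitely many thresholds encountered in the argument, including the one required for~\eqref{eq_cube_bound_CW}.
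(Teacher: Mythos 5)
Your proof is correct and follows essentially the same route as the paper: both arguments exhibit an admissible $R$ for the maximum defining $\CR_\cn$ by plugging into the bound $\CW_\cn^R \leq (R+\varepsilon)\cn^{-(1+\varepsilon)/3}$ and absorbing the integer-rounding loss into the slack available for $\cn$ large. The only cosmetic difference is that the paper solves $(\rho_\cn+\varepsilon)\cn^{-(1+\varepsilon)/3}=1$ for a real $\rho_\cn$ and then drops to $\rho_\cn-1$, whereas you test the ceiling of the target value directly.
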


\begin{proof}
Fix $\cn \in \N$ large enough. We recall from the its definition in~\eqref{eq_cube_definition_CR}
that $\CR_{\cn}$ is the largest integer $R$ such that $\CW_\cn^R \leq 1$. It is established
in~\eqref{eq_cube_bound_CW} that
\begin{equation*}
\CW_\cn^R \leq (R+\varepsilon) \cn^{-(1+\varepsilon)/3}.
\end{equation*}
Since the upper bound is an increasing function of $R$,
if we can find $\rho_{\cn} \in \R$ with
\begin{equation*}
(\rho_{\cn}+\varepsilon) \cn^{-(1+\varepsilon)/3} \leq 1,
\end{equation*}
then $\CR_{\cn} \geq \rho_{\cn}-1$. It is easily seen that such a value of $\rho_{\cn}$
is given by
\begin{equation*}
\rho_\cn \eq \cn^{(1+\varepsilon)/3}-\varepsilon.
\end{equation*}
We therefore have
\begin{equation*}
\CR_\cn \geq \cn^{(1+\varepsilon)/3}-\varepsilon-1 \geq \cn^{(1+\varepsilon)/3}-2 \geq (1-\varepsilon) \cn^{(1+\varepsilon)/3}
\end{equation*}
for $\cn$ large enough.
\end{proof}

\begin{lemma}[Lower bound on $\CN_{\cn}$]
There exists $\cn_\star(\varepsilon) \in \N$ such that
\begin{equation}
\label{eq_cube_bound_CN}
\CN_{\cn} \geq (1-\varepsilon)^3\cn^{(2+2\varepsilon)/3}
\end{equation}
for all $\cn \geq \cn_\star(\varepsilon)$.
\end{lemma}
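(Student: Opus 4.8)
The plan is to read off~\eqref{eq_cube_bound_CN} directly from the two lower bounds already in hand, namely~\eqref{eq_cube_bound_CB} for $\CB_r^{\cn}$ and~\eqref{eq_cube_bound_CR} for $\CR_{\cn}$, evaluated at the single index $r = \CR_{\cn}+1$. Throughout I would take $\cn_\star(\varepsilon)$ to be the largest of the thresholds appearing in the preceding lemmas, so that both~\eqref{eq_cube_bound_CB} and~\eqref{eq_cube_bound_CR} are simultaneously available for $\cn \geq \cn_\star(\varepsilon)$.

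First I would use the definition~\eqref{eq_cube_definition_CN} to write $\CN_{\cn} = \CB^{\cn}_{\CR_{\cn}+1} - \cn + 1 \geq \CB^{\cn}_{\CR_{\cn}+1} - \cn$, which reduces the task to a lower bound on $\CB^{\cn}_{\CR_{\cn}+1}$. Since the right-hand side of~\eqref{eq_cube_bound_CB} is nondecreasing in $r$, substituting $r = \CR_{\cn}+1$ (so that $r-1 = \CR_{\cn}$) and then invoking $\CR_{\cn} \geq (1-\varepsilon)\cn^{(1+\varepsilon)/3}$ gives
\begin{equation*}
\CB^{\cn}_{\CR_{\cn}+1}
\geq
\left(
\tfrac{2}{3}(1-\varepsilon)^3 \cn^{(1+\varepsilon)/3} + \cn^{(2-\varepsilon)/3}
\right)^{3/(2-\varepsilon)}.
\end{equation*}
Next I would factor out $\cn^{(2-\varepsilon)/3}$, using the elementary identities $\bigl(\cn^{(2-\varepsilon)/3}\bigr)^{3/(2-\varepsilon)} = \cn$ and $\cn^{(1+\varepsilon)/3}\cn^{-(2-\varepsilon)/3} = \cn^{(2\varepsilon-1)/3}$, to recast the bound as
\begin{equation*}
\CB^{\cn}_{\CR_{\cn}+1}
\geq
\cn\left(1 + \tfrac{2}{3}(1-\varepsilon)^3 \cn^{(2\varepsilon-1)/3}\right)^{3/(2-\varepsilon)}.
\end{equation*}
Because $3/(2-\varepsilon) \geq 1$ and $\cn^{(2\varepsilon-1)/3} \geq 0$, inequality~\eqref{eq_cube_lower_bound_1ta} applies with exponent $\alpha = 3/(2-\varepsilon)$ and yields
\begin{equation*}
\CB^{\cn}_{\CR_{\cn}+1}
\geq
\cn + \tfrac{2}{2-\varepsilon}(1-\varepsilon)^3 \cn^{1+(2\varepsilon-1)/3}
=
\cn + \tfrac{2}{2-\varepsilon}(1-\varepsilon)^3 \cn^{(2+2\varepsilon)/3}.
\end{equation*}
Subtracting $\cn$, and using $\tfrac{2}{2-\varepsilon} \geq 1$, then gives exactly~\eqref{eq_cube_bound_CN}.

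I do not anticipate a genuine obstacle here: the argument is just a short chain of monotonicity observations followed by the elementary bound~\eqref{eq_cube_lower_bound_1ta}. The only points requiring care are the bookkeeping of the exponents (in particular verifying $1+(2\varepsilon-1)/3 = (2+2\varepsilon)/3$) and the choice of $\cn_\star(\varepsilon)$ large enough that all invoked lemmas hold. The one mild subtlety worth flagging is that~\eqref{eq_cube_bound_CB} must be used at $r = \CR_{\cn}+1$ rather than $r = \CR_{\cn}$, since it is $\CB^{\cn}_{\CR_{\cn}+1}$ that appears in~\eqref{eq_cube_definition_CN}; the extra ``$+1$'' is harmless because the right-hand side of~\eqref{eq_cube_bound_CB} is monotone in $r$.
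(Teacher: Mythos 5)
Your proposal is correct and follows essentially the same route as the paper: bound $\CN_{\cn}$ below by $\CB^{\cn}_{\CR_{\cn}+1}-\cn$, insert the lower bounds \eqref{eq_cube_bound_CB} and \eqref{eq_cube_bound_CR}, factor out $\cn$, and linearize the power $3/(2-\varepsilon)$. Your use of the global inequality \eqref{eq_cube_lower_bound_1ta} is in fact slightly cleaner than the paper's appeal to the asymptotic expansion of $(1+\tau)^{3/(2-\varepsilon)}$ for small $\tau$, and it correctly keeps the coefficient $\tfrac{2}{3}$ from \eqref{eq_cube_bound_CB} where the paper's displayed computation miswrites it as $\tfrac{3}{2}$.
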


\begin{proof}
Recalling the definition of $\CN_{\cn}$ in~\eqref{eq_cube_definition_CN} and
the lower bounds on $\CB_{\cn}$ and $\CR_{\cn}$ respectively established
in~\eqref{eq_cube_bound_CB} and~\eqref{eq_cube_bound_CR}, we have
\begin{align*}
\CN_{\cn}
\geq
\CB_{\CR_{\cn}+1}^\cn-n
&\geq
\left (\frac{3}{2}(1-\varepsilon)^2 \CR_{\cn} + \cn^{(2-\varepsilon)/3}\right )^{3/(2-\varepsilon)}-n
\\
&\geq
\left (\frac{3}{2}(1-\varepsilon)^3 \cn^{(1+\varepsilon)/3} + \cn^{(2-\varepsilon)/3}\right )^{3/(2-\varepsilon)}-n
\end{align*}
Factorizing out $\cn$ in the last lower bound, we have
\begin{equation*}
\CN_{\cn}
\geq
\cn \left \{
\left (1 + \frac{2}{3} (1-\varepsilon)^3\cn^{-(1-2\varepsilon)/3}\right )^{3/(2-\varepsilon)}
-
1
\right \}
\geq
\cn \left \{
\left (1 + (1-\varepsilon)^3\cn^{-(1-2\varepsilon)/3}\right ) - 1
\right \}
\end{equation*}
where we used the facts that we can restrict our attention to large values of $\cn$
and that
\begin{equation*}
(1+\tau)^{3/(2-\varepsilon)} \sim 1+3/(2-\varepsilon)\tau \geq 1+3/2 \tau
\end{equation*}
for $\tau \geq 0$ sufficiently small. Cancelling the ones and merging
the powers of $\cn$ in the last bound concludes the proof.
\end{proof}

\begin{lemma}[Lower bound on $\CS_{\cl}$]
\label{eq_cube_bound_CS}
There exists $\cl_\star(\varepsilon) \in \N$ such that
\begin{equation}
\label{eq_cube_bound_CS}
\CS_\cl
\geq
\left (
\frac{(1-2\varepsilon)^2}{3}(\cl-\cl_\star(\varepsilon)) + \CS_{\cl_\star}^{-2/(1-2\varepsilon)}
\right )^{3/(1-2\varepsilon)}.
\end{equation}
for $\cl \geq \cl_\star(\varepsilon)$. In particular, for
$\cl \geq \cl_\star(\varepsilon)/(2\varepsilon)$, we have
\begin{equation}
\label{eq_cube_bound_CS_simple}
\CS_\cl
\geq
\left ( \frac{(1-2\varepsilon)^3}{3}\cl \right )^{3/(1-2\varepsilon)}.
\end{equation}
\end{lemma}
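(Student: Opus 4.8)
The plan is to observe that the statement concerns only the growth of the sequence $\{\CS_\cl\}_{\cl\geq1}$ driven by the recurrence~\eqref{eq_cube_definition_CS}, namely $\CS_{\cl+1}=\CS_\cl+\CN_{\CS_\cl}$, and to extract it from Proposition~\ref{proposition_cube_bound_sequence} after inserting the lower bound~\eqref{eq_cube_bound_CN}. First I would note that $\{\CS_\cl\}_{\cl\geq1}$ is strictly increasing and divergent: the $\cl^{\rm th}$ vertical layer contains at least its initiating cube $Q_{\CS_\cl}^\varepsilon$, so $\CS_{\cl+1}\geq\CS_\cl+1$, and since there are infinitely many layers (because $\sum_{\ck}\ell_{\ck}^2=+\infty$, as already recorded) this forces $\CS_\cl\geq\cl$ and hence $\CS_\cl\to+\infty$. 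Consequently there is a first index past which $\CS_\cl\geq\cn_\star(\varepsilon)$, and for such $\cl$ the bound~\eqref{eq_cube_bound_CN} applies with $\cn=\CS_\cl$, so that~\eqref{eq_cube_definition_CS} becomes
\begin{equation*}
\CS_{\cl+1}\geq\CS_\cl+(1-\varepsilon)^3\CS_\cl^{(2+2\varepsilon)/3}
\end{equation*}
for all $\cl$ large enough.

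Next I would apply Proposition~\ref{proposition_cube_bound_sequence} to the sequence $\ck\mapsto\CS_\ck$ with $\rho\eq(1-\varepsilon)^3$ and $\alpha\eq(2+2\varepsilon)/3$; this $\alpha$ lies in $(0,1)$ precisely because $\varepsilon<1/2$, and it satisfies $1-\alpha=(1-2\varepsilon)/3$, hence $1/(1-\alpha)=3/(1-2\varepsilon)$. I would then fix some $\eta=\eta(\varepsilon)\in(0,1)$ and choose $\cl_\star(\varepsilon)\in\N$ large enough to clear simultaneously the two thresholds $\CS_{\cl_\star(\varepsilon)}\geq\cn_\star(\varepsilon)$ (so that the displayed recurrence holds for every $\cl\geq\cl_\star(\varepsilon)$) and $\CS_{\cl_\star(\varepsilon)}\geq\mu_\star(\alpha,\rho,\eta)$ (the hypothesis of Proposition~\ref{proposition_cube_bound_sequence}); both are feasible since $\CS_\cl\geq\cl\to+\infty$. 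Proposition~\ref{proposition_cube_bound_sequence} then yields
\begin{equation*}
\CS_\cl\geq\left((1-\alpha)(1-\eta)\rho\,(\cl-\cl_\star(\varepsilon))+\CS_{\cl_\star(\varepsilon)}^{(1-2\varepsilon)/3}\right)^{3/(1-2\varepsilon)}
\end{equation*}
for all $\cl\geq\cl_\star(\varepsilon)$, which is~\eqref{eq_cube_bound_CS}: the outer exponent is $3/(1-2\varepsilon)$, the additive term $\CS_{\cl_\star(\varepsilon)}^{(1-2\varepsilon)/3}$ is nonnegative and only strengthens the estimate, and the leading coefficient $(1-\alpha)(1-\eta)\rho=\frac{1-2\varepsilon}{3}(1-\eta)(1-\varepsilon)^3$ is a fixed positive number that, on fixing $\eta$ suitably, can be taken no smaller than $\frac{(1-2\varepsilon)^2}{3}$ (only its positivity will matter in the sequel).

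Finally, \eqref{eq_cube_bound_CS_simple} follows from~\eqref{eq_cube_bound_CS} by discarding the nonnegative term and observing that $\cl\geq\cl_\star(\varepsilon)/(2\varepsilon)$ gives $\cl_\star(\varepsilon)\leq2\varepsilon\cl$, hence $\cl-\cl_\star(\varepsilon)\geq(1-2\varepsilon)\cl$ and $\frac{(1-2\varepsilon)^2}{3}(\cl-\cl_\star(\varepsilon))\geq\frac{(1-2\varepsilon)^3}{3}\cl$. I expect no genuine obstacle here: the whole structural input is~\eqref{eq_cube_bound_CN} together with Proposition~\ref{proposition_cube_bound_sequence}, and what remains is bookkeeping — selecting a single $\cl_\star(\varepsilon)$ meeting both threshold conditions, plus the elementary verification of the leading constant. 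The payoff is that this is exactly the estimate needed downstream: with $\CH_\cl=\CS_\cl^{-(1+\varepsilon)/3}$, \eqref{eq_cube_bound_CS_simple} gives $\CH_\cl\leq\big(\frac{(1-2\varepsilon)^3}{3}\cl\big)^{-(1+\varepsilon)/(1-2\varepsilon)}$, and since $(1+\varepsilon)/(1-2\varepsilon)>1$ the series $\sum_\cl\CH_\cl$ converges, which is the condition that yields Lemma~\ref{lemma_cube_packing}.
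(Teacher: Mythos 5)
Your proposal follows essentially the same route as the paper: insert the lower bound~\eqref{eq_cube_bound_CN} into the recurrence~\eqref{eq_cube_definition_CS}, apply Proposition~\ref{proposition_cube_bound_sequence} with $\rho=(1-\varepsilon)^3$ and $\alpha=(2+2\varepsilon)/3$ after choosing $\cl_\star(\varepsilon)$ large enough (using that $\CS_\cl\to+\infty$), and then deduce~\eqref{eq_cube_bound_CS_simple} from $\cl-\cl_\star(\varepsilon)\geq(1-2\varepsilon)\cl$; you are in fact slightly more careful than the paper in justifying the two threshold conditions and in writing the additive term with the correct exponent $(1-2\varepsilon)/3$. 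One small caveat: your claim that $\eta$ can be fixed so that $(1-\eta)(1-\varepsilon)^3\geq 1-2\varepsilon$ fails for small $\varepsilon$ (since $(1-\varepsilon)^3<1-2\varepsilon$ there), so the leading constant in~\eqref{eq_cube_bound_CS} is not literally attained — a defect shared by the paper's own choice $\eta=\varepsilon$ — but, as you correctly note, only the positivity of the constant and the exponent matter downstream.
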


\begin{proof}
Recalling the definition of $\CS_{\cl}$ in~\eqref{eq_cube_definition_CS} and the
lower bound on $\CN_{\cn}$ in~\eqref{eq_cube_bound_CN}, we have
\begin{equation*}
\CS_{\cl+1}
\geq
\CS_{\cl} + (1-\varepsilon)^3 \CS_{\cl}^{(2+2\varepsilon)/3}.
\end{equation*}
We therefore wish to apply Proposition~\ref{proposition_cube_bound_sequence}
with $\ck_\star = \cl_\star$ to be chosen large enough, $\rho = (1-\varepsilon)^3$,
$\alpha = (2+2\varepsilon)/3$ and $\eta = \varepsilon$. Since the sequence
$\{\CS_{\cl}\}_{\cl \geq 1}$ is unbounded, it is clear that we can select
$\cl_\star$ solely depending on $\varepsilon$ so that the
Proposition~\ref{proposition_cube_bound_sequence}, and~\eqref{eq_cube_bound_CS}
is then exactly the bound given by~\eqref{eq_cube_bound_sequence}.

To see that~\eqref{eq_cube_bound_CS_simple} holds true, we simply observe that the
condition on $\cl$ implies that $2\varepsilon \cl \geq \cl_\star(\varepsilon)$
and $(1-2\varepsilon) \cl \leq \cl-\cl_\star(\varepsilon)$.
\end{proof}

\begin{theorem}[Cube packing]
We have
\begin{equation}
\sum_{\cl \geq 1} \CH_\cl
<
+\infty.
\end{equation}
\end{theorem}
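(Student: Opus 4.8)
The plan is to use the lower bound on $\CS_\cl$ established in Lemma~\ref{eq_cube_bound_CS} to control the layer heights $\CH_\cl = (\CS_\cl)^{-(1+\varepsilon)/3}$ from~\eqref{eq_cube_definition_CH}, and then show that the resulting series of upper bounds converges by comparison with a $p$-series. Recall that throughout this section we have fixed $\varepsilon < 1/2$, so the exponents below are all well defined and positive.

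First I would split the sum as $\sum_{\cl \geq 1} \CH_\cl = \sum_{\cl < \cl_0} \CH_\cl + \sum_{\cl \geq \cl_0} \CH_\cl$ where $\cl_0 \eq \lceil \cl_\star(\varepsilon)/(2\varepsilon)\rceil$, with $\cl_\star(\varepsilon)$ the integer produced by Lemma~\ref{eq_cube_bound_CS}. The first (finite) sum is trivially bounded, so only the tail matters. For $\cl \geq \cl_0$ the simplified bound~\eqref{eq_cube_bound_CS_simple} applies, giving
\begin{equation*}
\CS_\cl \geq \left(\frac{(1-2\varepsilon)^3}{3}\cl\right)^{3/(1-2\varepsilon)},
\end{equation*}
and therefore, raising both sides to the negative power $-(1+\varepsilon)/3$,
\begin{equation*}
\CH_\cl = (\CS_\cl)^{-(1+\varepsilon)/3} \leq \left(\frac{(1-2\varepsilon)^3}{3}\right)^{-(1+\varepsilon)/(1-2\varepsilon)} \cl^{-(1+\varepsilon)/(1-2\varepsilon)}.
\end{equation*}

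It then remains to observe that the exponent $\gamma \eq (1+\varepsilon)/(1-2\varepsilon)$ satisfies $\gamma > 1$: indeed $1+\varepsilon > 1 - 2\varepsilon$ is equivalent to $3\varepsilon > 0$, which holds. Hence $\sum_{\cl \geq \cl_0} \cl^{-\gamma}$ converges as a $p$-series with $p = \gamma > 1$, and since the leading constant is finite and independent of $\cl$, the tail $\sum_{\cl \geq \cl_0}\CH_\cl$ converges. Combining with the bounded finite head gives $\sum_{\cl \geq 1}\CH_\cl < +\infty$, which is exactly the claim. By the reduction recorded at the end of the ``Plan of the proof'' subsection, this finiteness is equivalent to the statement of Lemma~\ref{lemma_cube_packing}: the cubes $Q(\bx_\ck^\varepsilon, \ell_\ck)$ produced by~\eqref{eq_cube_construction} are non-overlapping by construction, fit into the horizontal unit square in each layer, and the total height $a^\varepsilon \eq \sum_{\cl \geq 1}\CH_\cl$ is finite.

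There is essentially no obstacle here; the real work was front-loaded into Lemma~\ref{eq_cube_bound_CS}. The only point requiring the slightest care is checking that the exponent $\gamma$ on $\cl$ is strictly larger than $1$ — i.e., that the geometric decay rate of the layer heights is summable — which, as noted, reduces to the strict inequality $3\varepsilon > 0$ and in particular uses nothing beyond $\varepsilon > 0$ (the restriction $\varepsilon < 1/2$ was already consumed upstream to guarantee infinitely many layers and the validity of~\eqref{eq_cube_bound_CS}).
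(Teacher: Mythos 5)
Your proof is correct and follows essentially the same route as the paper: bound $\CH_\cl$ via the simplified lower bound~\eqref{eq_cube_bound_CS_simple} on $\CS_\cl$ and conclude by comparison with a $p$-series of exponent $(1+\varepsilon)/(1-2\varepsilon) > 1$. Your explicit split into a finite head and a tail at $\cl_0$ merely spells out what the paper leaves implicit in the phrase ``for $\cl$ sufficiently large.''
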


\begin{proof}
Using the definition of $\CH_{\cl}$ in~\eqref{eq_cube_definition_CH}
and the lower bound on $\CS_{\cl}$ established in~\eqref{eq_cube_bound_CS},
we have
\begin{equation*}
\CH_{\cl}
\leq
\left ( \frac{(1-2\varepsilon)^3}{3}\cl \right )^{-(1+\varepsilon)/(1-2\varepsilon)}
=
\left ( \frac{(1-2\varepsilon)^3}{3} \right )^{-(1+\varepsilon)/(1-2\varepsilon)}
\cl^{-(1+\varepsilon)/(1-2\varepsilon)},
\end{equation*}
for $\cl$ sufficiently large, and we immediately see that the series converges since
\begin{equation*}
-\frac{1+\varepsilon}{1-2\varepsilon} < -1.
\end{equation*}
\end{proof}

\bibliographystyle{amsplain}
\bibliography{bibliography}

\end{document}